\providecommand{\U}[1]{\protect\rule{.1in}{.1in}}
\providecommand{\U}[1]{\protect \rule{.1in}{.1in}}
\newtheorem{theorem}{Theorem}[section]
\newtheorem{corollary}[theorem]{Corollary}
\newtheorem{definition}[theorem]{Definition}
\newtheorem{example}[theorem]{Example}
\newtheorem{lemma}[theorem]{Lemma}
\newtheorem{proposition}[theorem]{Proposition}
\newtheorem{remark}[theorem]{Remark}
\newenvironment{proof}[1][Proof]{\noindent \textbf{#1.} }{\  \rule{0.5em}{0.5em}}
\begin{document}

\title{BSDE, Path-dependent PDE and Nonlinear Feynman-Kac Formula}
\author{Shige Peng\thanks{Partially supported by National Basic Research Program of China (973 Program) (No.
2007CB814906), and NSF of China (No. 10921101).}  and Falei Wang*\\
School of Mathematics, Shandong University\\
250100, Jinan, China}
\date{August 21, 2011}
\maketitle


\begin{abstract}
   In this paper, we introduce a type of path-dependent quasilinear (parabolic) partial differential equations in which the (continuous)
   paths $\omega_t$ on an interval $[0,t]$
   becomes the basic variables  in the place of classical variables
   $(t,x)\in [0,T]\times\mathbb{R}^d$.  This new type of PDE are formulated through a classical backward stochastic differential equations
(BSDEs, for short) in which the terminal values and the generators
are allowed to be general function of Brownian paths.  In this way
we have established a new type of nonlinear Feynman- Kac formula for
a general non-Markovian BSDE. Some main properties of regularities for this new PDE was obtained. \\
 \\
\textbf{Keywords}: backward stochastic differential equation, nonlinear Feynman-Kac
formula, It\^{o}  integral and
It\^{o}  calculus, Path-dependent PDE.
\\
\\
\textbf{Mathematics Subject Classification (2010).
}60H30, 60H10, 35K99.
\end{abstract}

\section{Introduction}

Linear Backward Stochastic Differential Equations (in short BSDE)
was introduced by Bismut \cite{Bismut} in 1973. Pardoux and Peng
\cite[1990]{PP1}  established the existence and uniqueness theorem
for nonlinear BSDEs under a standard Lipschitz condition. Peng
\cite[1991]{Peng1} and then Peng and Pardoux \cite[1992]{PP2}
introduced what we called nonlinear Feynman-Kac formula which
provides a probabilistic representation for a wide class of (system
of) quasi-linear  partial differential equations. Since then,
especially after the paper of El Karoui-Peng-Quenez \cite[1997]{EPQ}
for BSDE, the theory of BSDE have received a wide attention for both
theoretical research and applications.

In order to illustrate the above nonlinear Feynman-Kac formula,  let
us consider
the following well-posed BSDE%
\begin{equation}%
\begin{array}
[c]{rl}%
-dY(t)= & f(t,Y(t),Z(t))dt-Z(t)dB(t),\ \ t\in\lbrack0,T],\\
Y(T)= & \varphi(B(T)),
\end{array}
\tag{BSDE}%
\end{equation}
where $B$ is a given $d$-dimensional Brownian motion, $f=f(t,y,z)$,
$\varphi=\varphi(x)$ are two `good' functions. The unique solution
of this BSDE consists of two stochastic process $(Y(t),Z(t))_{0\leq
t\leq T}$ adapting to the natural filtration
$(\mathcal{F}_{t})_{t\geq0}$ of Brownian motion $B$ (see Section
2.2. for details), namely, both $Y(t)$ and $Z(t)$ at each time
$t\in\lbrack0,T]$ are functions of the Brownian path $B(s)_{0\leq
s\leq t}$. For the case $Y(T)=\varphi(B(T))$, there exists a
deterministic function $u=u(t,x)$ defined on
$[0,T]\times\mathbb{R}^{d}$ such that $Y(t)=u(t,B(t))$ and
$Z(t)=\nabla u(t,B(t))$. We can also prove that the function
$u=u(t,x)$, $t\in\lbrack0,T)$, $x\in\mathbb{R}^{d}$, is in fact the
unique solution of the following quasi-linear PDE:
\begin{equation}
\partial_{t}u+\frac{1}{2}\Delta u+f(t,u,\nabla u)=0,\ \ u(T,x)=\varphi
(x).\tag{PDE}%
\end{equation}
This relation permits us to solve the above type of BSDE by PDE.
Conversely we can also use the BSDE to solve the PDE. This nonlinear
Feynman-Kac formula also provides a nonlinear Monte-Carlo method via
the BSDE (\cite{Bouchard-Touzi}, \cite{GEJW}) to solve numerically the PDE.

But in general the terminal condition $Y(T)$ of the BSDE may be a
general function of Brownian paths, i.e., $Y(T)=\varphi(B(s)_{0\leq
t\leq T})$, where $\varphi$ is a given functional defined on the
space $C([0,T],\mathbb{R}^{d})$ of $d$-dimensional continuous paths.
In this situation the solution $(Y,Z)$ is regarded as a generalized
`path-dependent' solution of the above BSDE. This problem was raised
by the first author and summarized in his ICM2010's talk
\cite[Sec.1.3]{PengICM2010}.

With a very different point of view, Dupire \cite[2009]{Dupire}
introduced a new\ type of a functional It\^{o}'s formula which
non-trivially generalized the classical one (See Cont and
Fourni\'{e} \cite{CF1, CF2, CF3} for a more general and more
systematic research). His idea is to introduce a simple but deeply
insightful definition of derivatives with respect to path for a
given $(\mathcal{F}_{t})_{t\geq0}$-adapted process
$Y(t)=u(t,\omega(s)_{0\leq s\leq t})$, which is a family of
functional of Brownian path indexed by time $t$. He then claimed
that if a martingale is a $C^{1,2}$-function in his new definition
then it is also a solution of a `functional PDE'. The so-called
functional Feynman-Kac formula was also claimed under the same
framework.

In this paper we will prove that, under certain smooth assumptions
of the given path-dependent  functions
\[
Y(T)=\Phi(\omega(s)_{0\leq s\leq T}),\ \ \ f=f(t,\omega(s)_{0\leq
s\leq t},y,z)
\]
with respect to $(y,z)$ and the path $\omega$ (see$(\mathbf{ H1})$
and $(\mathbf{H2})$ in Section 3), the solution $(Y(t),Z(t))$ of
(BSDE) solves (PDE). More specifically, the path-function
$u(t,\omega(s)_{0\leq s\leq t}):=Y(t,\omega)$ is the unique
$C^{1,2}$-solution of (PDE) and $Z(t)$ is the vertical derivative.
We then can prove the regularity of the solution of BSDE in the
sense of Dupire's derivatives which gives the existence of $Y(t)$.
The results of this paper  has non-trivially generalized the ones of
Pardoux and Peng \cite[1992]{PP1} to the path-dependent situation.

The paper is organized as follows. In section 2, we present some
existing results on functional It\^{o}'s formula and BSDE that will
be used in this paper. In section 3, we establish some estimates and
regularity results for the solution of BSDEs with respect to paths.
Finally, in section 4, we establish our main results, Theorems 4.1
and 4.3 which provide a one to one correspondence between (BSDE) and
the system of path-dependent PDE.

When the coefficients of (BSDE) are only Lipschitz functions, we
usually can not obtain the smoothness results given in this paper,
thus a new type of viscosity solution  is required. We refer to Peng
\cite[2011]{Peng3} for the corresponding comparison theorem.

\section{Preliminaries}

\subsection{Functional It\^{o}'s formula}

The following notations are mainly from Dupire \cite{Dupire} . Let $T>0$ be fixed. For each $t\in\lbrack0,T]$, we denote by
$\Lambda_{t}$ the set of c\`{a}dl\`{a}g (French abbreviation for
\textquotedblleft right continuous with left limit\textquotedblright, also
often denoted by RCLL) $\mathbb{R}^{d}$-valued functions on $[0,t]$.

For each $\gamma\in\Lambda_{T}$ the value of $\gamma$ at time $s\in
\lbrack0,T]$ is denoted by $\gamma(s)$. Thus $\gamma=\gamma(s)_{0\leq s\leq
T}$ is a c\`{a}dl\`{a}g process on $[0,T]$ and its value at time $s$ is
$\gamma(s)$. The path of $\gamma$ up to time $t$ is denoted by $\gamma_{t}$,
i.e., $\gamma_{t}=\gamma(s)_{0\leq s\leq t}\in\Lambda_{t}$. We denote
$\Lambda=\bigcup_{t\in\lbrack0,T]}\Lambda_{t}$. We sometimes also specifically
write
\[
\gamma_{t}=\gamma(s)_{0\leq s\leq t}=(\gamma(s)_{0\leq s<t},\gamma(t))
\]
to indicate the terminal position $\gamma(t)$ of $\gamma_{t}$ which often
plays a special role in this framework. For each $\gamma_{t}\in\Lambda$ and
$x\in\mathbb{R}^{d}$ we denote $\gamma_{t}^{x}=(\gamma(s)_{0\leq s<t}%
,\gamma(t)+x)$ which is also an element in $\Lambda_{t}$.

We are interested in a function $u$ of path, i.e., $u:\Lambda\mapsto
\mathbb{R}$. This function $u=u(\gamma_{t})$, $\gamma_{t}\in\Lambda$ can be
also regarded as a family of real valued functions:
\[
u(\gamma_{t})=u(t,\gamma(s)_{0\leq s\leq t})=u(t,\gamma(s)_{0\leq s<t}%
,\gamma(t)):\gamma_{t}\in\Lambda_{t},\ \ t\in\lbrack0,T].
\]
We also denote $u(\gamma_{t}^{x}):=u(t,\gamma(s)_{0\leq s<t},\gamma(t)+x)$,
for $\gamma_{t}\in\Lambda_{t}$, $x\in\mathbb{R}^{d}$.

\begin{remark}
It is also very important to understand $u(\gamma_{t}^{x})$ as a function
of\ $t$, $(\gamma(s)_{0\leq s<t},\gamma(t))$ and $x$. A typical case is
$u(\gamma_{t})=u(t,\gamma(t_{-}),\gamma(t)+x)$, $t\in\lbrack0,T]$, where
$\gamma(t_{-})=\lim_{s\uparrow t}\gamma(s)$.
\end{remark}

We now introduce a distance on $\Lambda$. Let $\left\langle \cdot\right\rangle
$ and $|\cdot|$ denote the inner product and norm in $\mathbb{R}^{d}$. For
each $0\leq t\leq\bar{t}\leq T$ and $\gamma_{t},\bar{\gamma}_{\bar{t}}%
\in\Lambda$, we denote
\begin{align*}
&  \|\gamma_{t}\|:=\sup\limits_{r\in\lbrack0,t]}|\gamma(r)|,\\
&  d_{\infty}(\gamma_{t},\bar{\gamma}_{\bar{t}}):=\max(\sup\limits_{r\in
\lbrack0,t)}\{|\gamma(r)-\bar{\gamma}(r)|\},\sup_{r\in\lbrack t,\bar{t}%
]}\{|\gamma(t)-\bar{\gamma}(r)|\})+|t-\bar{t}|.
\end{align*}

It is obvious that $\Lambda_{t}$ is a Banach space with respect to $\Vert
\cdot\Vert$. Since $\Lambda$ is not a linear space, $d_{\infty}$ is not a norm.

\begin{definition}
(Continuous) A functionals $u:\Lambda\mapsto\mathbb{R}$ is said $\Lambda
$-continuous at $\gamma_{t}\in\Lambda$, if for any $\varepsilon>0$ there
exists $\delta>0$ such that for each $\bar{\gamma}_{\bar{t}}\in\Lambda$
satisfying $d_{\infty}(\gamma_{t},\bar{\gamma}_{\bar{t}})<\delta$, we have
$|u(\gamma_{t})-u(\bar{\gamma}_{\bar{t}})|<\varepsilon$. $u$ is said to be
$\Lambda$-continuous if it is $\Lambda$-continuous at each $\gamma_{t}%
\in\Lambda$.
\end{definition}

\begin{remark}
In our framework we often regard $u(\gamma_{t}^{x})$ as a function of $t$,
$\gamma$ and $x$, i.e., $u(\gamma_{t}^{x})=u(t,\gamma(s)_{0\leq s<t}%
,\gamma(t)+x)$. Thus, for a fixed $\gamma\in\Lambda_{T}$, $u(\gamma_{t}^{x})$
is regarded as a function of $(t,x)\in\lbrack0,T]\times\mathbb{R}^{d}$.
\end{remark}

\begin{definition}
Let $u:\Lambda\mapsto\mathbb{R}$ and $\gamma_{t}\in\Lambda$ be given. If there
exists $p\in\mathbb{R}^{d}$, such that
\[
u(\gamma_{t}^{x})=u(\gamma_{t})+\left\langle p,x\right\rangle +o(|x|),\ x\in
\mathbb{R}^{d},\ \
\]
Then we say that $u$ is (vertically) differentiable at $\gamma_{t}$ and denote
the gradient of $D_{x}u(\gamma_{t})=p$. $u$ is said to be vertically
differentiable in $\Lambda$ if $D_{x}u(\gamma_{t})$ exists for each
$\gamma_{t}\in\Lambda$. We can similarly define the Hessian $D_{xx}%
u(\gamma_{t})$. It is an $\mathbb{S}(d)$-valued function defined on $\Lambda$,
where $\mathbb{S}(d)$ is the space of all $d\times d$ symmetric matrices.
\end{definition}

For each $\gamma_{t}\in\Lambda$ we denote
\[
\gamma_{t,s}(r)=\gamma(r)\mathbf{1}_{[0,t)}(r)+\gamma(t)\mathbf{1}%
_{[t,s]}(r),\ \ r\in\lbrack0,s].
\]
It is clear that $\gamma_{t,s}\in\Lambda_{s}$.

\begin{definition}
For a given $\gamma_{t}\in\Lambda$ if we have
\[
u(\gamma_{t,s})=u(\gamma_{t})+a(s-t)+o(|s-t|),\ \ s\geq t,\ \
\]
then we say that $u(\gamma_{t})$ is (horizontally) differentiable in $t$ at
$\gamma_{t}$ and denote $D_{t}u(\gamma_{t})=a$. $u$ is said to be horizontally
differentiable in $\Lambda$ if $D_{t}u(\gamma_{t})$ exists for each
$\gamma_{t}\in\Lambda$.
\end{definition}

\begin{definition}
Define $\mathbb{C}^{j,k}(\Lambda)$ as the set of functionals $u:=(u(\gamma
_{t}))_{\gamma_{t}\in\Lambda}$ defined on $\Lambda$ which are $j$ times
horizontally and and $k$ times vertically differentiable in $\Lambda$ such
that all these derivatives are $\Lambda$-continuous.
\end{definition}

\begin{definition}
$u:=(u(\gamma_{t}))_{\gamma_{t}\in\Lambda}$ is said to be in $\mathbb{C}%
_{l,lip}^{0,2}(\Lambda)$, if $u$ is in $\mathbb{C}^{0,2}(\Lambda)$
and, for $\varphi=u,D_{x}u$, $D_{xx}u$, we have
\[
|\varphi(\gamma_{t})-\varphi(\bar{\gamma}_{\bar{t}})|\leq C(1+\|\gamma
_{t}\|^{q}+\|\bar{\gamma}_{\bar{t}}\|^{q})d_{\infty}(\gamma_{t},\bar{\gamma
}_{\bar{t}}),\ \ \ \gamma_{t},\bar{\gamma}_{t}\in\Lambda,
\]
where $C,q$ are constants depending only on $\varphi$.
\end{definition}

\begin{example}
\label{w12} If $u(\gamma_{t})=f(t,\gamma_{t}(t))$ with $f\in C^{1,1}%
([0,T[\times\mathbb{R})$, we have
\[
D_{t}F(\gamma_{t})=\partial_{t}f(t,\gamma_{t}(t)),\ \ \ D_{x}F_{t}(\gamma
_{t})=\partial_{x}f(t,\gamma_{t}(t)),
\]
which is the classic derivative. In general, these derivatives also satisfy
the classic properties: linearity, product and chain rule.
\end{example}

The following It\^{o} formula was firstly obtained by Dupire \cite[2009]{Dupire} and then by Cont and Fourni\'{e} \cite[2010]{CF2}  for a more general formulation.

\begin{theorem}
(Functional It\^{o}'s formula).\label{w2} Let $(\Omega,\mathcal{F}%
,(\mathcal{F}_{t})_{t\in\lbrack0,T]},P)$ be a probability space, if $X$ is a
continuous semi-martingale and $u$ is in $\mathbb{C}^{1,2}(\Lambda)$, then for
any $t\in\lbrack0,T[$:
\begin{align*}
u(X_{t})-u(X_{0})=  &  \int_{0}^{t}D_{s}u(X_{s})ds+\int_{0}^{t}D_{x}%
u(X_{s})dX(s)+\frac{1}{2}\int_{0}^{t}D_{xx}u(X_{s})d\langle X\rangle(s),\ \ \ a.s..
\end{align*}
In particular , if $X$ is a  $\mathcal{F}$-Brownian motion,
\begin{align*}
u(X_{t})-u(X_{0})=  &  \int_{0}^{t}D_{s}u(X_{s})ds+\int_{0}^{t}D_{x}%
u(X_{s})dX(s)+\frac{1}{2}\int_{0}^{t}D_{xx}u(X_{s})ds,\ \ \ a.s..
\end{align*}
\end{theorem}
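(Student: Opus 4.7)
The plan is to adapt the classical It\^o formula proof by a time-discretization argument, exploiting the splitting of the path-increment $u(X_{t_{i+1}}) - u(X_{t_i})$ into a purely horizontal part and a purely vertical part. Fix a partition $\pi_n : 0 = t_0 < t_1 < \cdots < t_n = t$ with mesh $|\pi_n| \to 0$, and write the telescoping decomposition
\[
u(X_t) - u(X_0) = \sum_{i=0}^{n-1} \bigl[u(X_{t_i, t_{i+1}}) - u(X_{t_i})\bigr] + \sum_{i=0}^{n-1} \bigl[u(X_{t_{i+1}}) - u(X_{t_i, t_{i+1}})\bigr],
\]
where $X_{t_i, t_{i+1}}$ is the path in $\Lambda_{t_{i+1}}$ that coincides with $X$ on $[0, t_i)$ and is frozen at the value $X(t_i)$ on $[t_i, t_{i+1}]$, so that $X_{t_{i+1}} = (X_{t_i, t_{i+1}})^{\Delta X_i}$ with $\Delta X_i := X(t_{i+1}) - X(t_i)$.

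The horizontal sum is handled by the definition of $D_t u$: since $s \mapsto u(X_{t_i, s})$ is the horizontal extension appearing in the definition of $D_t u$ and $D_t u$ is $\Lambda$-continuous, the fundamental theorem of calculus gives $u(X_{t_i, t_{i+1}}) - u(X_{t_i}) = \int_{t_i}^{t_{i+1}} D_s u(X_{t_i, s})\, ds$. For the vertical sum, since $D_x u$ and $D_{xx} u$ are $\Lambda$-continuous, the map $x \mapsto u((X_{t_i, t_{i+1}})^x)$ is $C^2$ on $\mathbb{R}^d$, so a second-order Taylor expansion at $x = 0$ yields
\[
u(X_{t_{i+1}}) - u(X_{t_i, t_{i+1}}) = \bigl\langle D_x u(X_{t_i, t_{i+1}}), \Delta X_i\bigr\rangle + \tfrac{1}{2}\bigl\langle D_{xx} u(X_{t_i, t_{i+1}})\Delta X_i,\, \Delta X_i\bigr\rangle + R_i,
\]
where $R_i = o(|\Delta X_i|^2)$ with a modulus provided by the continuity of $D_{xx}u$.

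To pass to the limit I would first localize by a stopping time $\tau_N = \inf\{s \leq t : |X(s)| \geq N\}$ so that the paths $\{X_s : 0 \le s \le \tau_N\}$ stay in a fixed bounded set; by the continuity of $X$ and $\Lambda$-continuity of $D_s u, D_x u, D_{xx} u$ on this (precompact, under $d_\infty$) family of path pieces, we obtain uniform moduli of continuity. Then $d_\infty(X_{t_i, s}, X_s) \to 0$ uniformly for $s \in [t_i, t_{i+1}]$ forces the horizontal Riemann sum to converge in probability to $\int_0^t D_s u(X_s)\, ds$; the first-order vertical sum converges to $\int_0^t D_x u(X_s)\, dX(s)$ by the standard convergence of nonanticipating Riemann sums to the It\^o integral for continuous semimartingales; and the second-order sum converges to $\tfrac{1}{2}\int_0^t D_{xx} u(X_s)\, d\langle X\rangle(s)$ via the convergence $\sum_i f_i (\Delta X_i)(\Delta X_i)^\top \to \int_0^t f(s)\, d\langle X\rangle(s)$ for uniformly continuous bounded integrands $f_i = D_{xx}u(X_{t_i,t_{i+1}})$. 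The special Brownian case follows by substituting $\langle X\rangle(s) = s$. Finally send $N \to \infty$, since $\tau_N \uparrow t$ a.s.

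The main technical obstacle is controlling the remainder $\sum_i R_i$ in probability. The bound $R_i = o(|\Delta X_i|^2)$ is pointwise in $i$, so one needs a uniform modulus: on the localized event, $\Lambda$-continuity of $D_{xx}u$ together with continuity of $X$ produces a deterministic $\varepsilon_n \to 0$ with $|R_i| \leq \varepsilon_n |\Delta X_i|^2$; since $\sum_i |\Delta X_i|^2$ is bounded in probability by $\mathrm{tr}\langle X\rangle(t \wedge \tau_N)$, we conclude $\sum_i R_i \to 0$. All other steps are essentially bookkeeping around this uniform estimate.
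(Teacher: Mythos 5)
The paper does not prove this theorem: it is quoted from Dupire and from Cont and Fourni\'e, so there is no internal proof to compare against. Your sketch follows the standard discretization strategy of those references, and the horizontal part, the localization, and the treatment of the quadratic-variation sum are all in order. But there is a genuine gap at the vertical step. The identity $X_{t_{i+1}}=(X_{t_i,t_{i+1}})^{\Delta X_i}$ is false: by the paper's definition $\gamma_t^{x}$ perturbs only the terminal value, so $(X_{t_i,t_{i+1}})^{\Delta X_i}$ is frozen at $X(t_i)$ on $[t_i,t_{i+1})$ and jumps to $X(t_{i+1})$ at the endpoint, whereas $X_{t_{i+1}}$ follows the true trajectory on $[t_i,t_{i+1})$. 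These two paths differ on the whole subinterval, and the vertical derivative says nothing about that difference. If you insert the missing correction term $u(X_{t_{i+1}})-u\bigl((X_{t_i,t_{i+1}})^{\Delta X_i}\bigr)$, all that $\Lambda$-continuity gives you is a bound by the modulus of continuity of $u$ at $d_\infty$-distance $\mathrm{osc}(X;[t_i,t_{i+1}])$, i.e.\ $o(1)$ per term with no rate; summing $n$ such terms does not produce something that vanishes as the mesh goes to zero. This is precisely the obstruction that forces the published proofs to reorganize the argument.

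The repair, as in Dupire and Cont--Fourni\'e, is to run the telescoping entirely along a piecewise-constant (c\`adl\`ag) approximation ${}^{n}X$ of the path. For such a path each increment really is a horizontal extension followed by an exact vertical perturbation (a jump of the endpoint), so the Taylor expansion is legitimate and the formula holds exactly for $u({}^{n}X_t)-u({}^{n}X_0)$ with no per-interval comparison between the true and the frozen path. One then passes to the limit once, at the level of the integrands and of the endpoint values, using the uniform $d_\infty$-convergence ${}^{n}X_s\to X_s$ together with the $\Lambda$-continuity (and a local boundedness/uniform continuity hypothesis on a $d_\infty$-neighbourhood of the compact set $\{X_s:s\le t\}$, which your appeal to ``precompactness'' also quietly needs, since pointwise $\Lambda$-continuity alone does not give a uniform modulus on the frozen and jumped paths that lie outside that compact set). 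With that restructuring your remaining limit arguments go through.
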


\subsection{Backward Stochastic Differential Equations}

Let $\Omega=C([0,T],\mathbb{R}^{d})$ and $P$ the standard Wiener measure
defined on $(\Omega,\mathcal{B}(\Omega))$. We denote by the canonical process
$B(t)=B(t,\omega)=\omega(t)$, $t\in\lbrack0,T]$, $\omega\in\Omega$. Then
$B(t)_{0\leq t\leq T}$ is a $d$-dimensional Brownian motion defined on the
probability space $(\Omega,\mathcal{B}(\Omega),P)$. Let $\mathcal{N}$ be the
collection of all $P$-null sets in $\Omega$. For any $0\leq t\leq r\leq T$,
$\mathcal{F}_{r}^{t}$ denotes the completion of $\sigma(B(s)-B(t);t\leq s\leq
r)$, i.e., $\mathcal{F}_{r}^{t}=\sigma\{\sigma(B(s)-B(t);t\leq s\leq
r)\vee\mathcal{N}\}$. We also write $\mathcal{F}_{r}$ for $\mathcal{F}_{r}%
^{0}$ and $\mathcal{F}^{t}$ for $\mathcal{F}_{T}^{t}$.

For any $0\leq t\leq T$ , we denote by $L^{2}(\mathcal{F}_{t})$ the
set of all square integrable $\mathcal{F}_{t}$-measurable random
variables, $M^{2}(t,T)$ the space of all
$\mathcal{F}_{s}^{t}$-adapted, $\mathbb{R}^{d}$-valued
processes $(X(s))_{s\in\lbrack t,T]}$ with $E[\int_{t}^{T}|X(s)|^{2}%
ds]<\infty$ and $S^{2}(t,T)$ the space of all $\mathcal{F}_{s}^{t}$-adapted,
$\mathbb{R}^{d}$-valued continuous processes $(X(s))_{s\in\lbrack t,T]}$ with
$E[\sup\limits_{s\in\lbrack t,T]}|X(s)|^{2}]<\infty$.

Let us consider a deterministic function $f:\Lambda\times\mathbb{R}^{m}%
\times\mathbb{R}^{m\times d}\mapsto\mathbb{R}^{m}$, which will be in the
following the generator of our BSDEs. For the function $f$, we will make the
following assumptions:\newline$\bullet$ $f(\gamma_{t},y,z)$ is a given
continuous function on $\Lambda\times\mathbb{R}^{m}\times\mathbb{R}^{m\times
d}$. \newline$\bullet$ There exists constants $C\geq0$ and $q\geq0$ such that:
for $\gamma_{t},\bar{\gamma}_{t}\in\Lambda,\ y,\bar{y}\in\mathbb{R}^{m}%
,z,\bar{z}\in\mathbb{R}^{m\times d}$,
\[
|f(\gamma_{t},y,z)-f(\bar{\gamma}_{t},\bar{y},\bar{z})|\leq C((1+\|\gamma
_{t}\|^{q}+\|\bar{\gamma}_{t}\|^{q})\|\gamma_{t}-\bar{\gamma}_{t}%
\|+|y-\bar{y}|+|z-\bar{z}|).
\]

The following result on backward stochastic differential equations
(BSDEs) is by now well known, for its proof the reader is referred
to Pardoux and Peng \cite{PP1}.
\begin{lemma}
Let $f$ satisfy the above conditions, then for each $\xi\in L^{2}%
(\mathcal{F}_{T})$, the BSDE
\begin{align}\label{BSDE1}
Y(t)=\xi+\int_{t}^{T}f(B_{s},Y(s),Z(s))ds-\int_{t}^{T}Z(s)dB(s),\ \ 0\leq
t\leq T,
\end{align}
has a unique adapted solution \[(Y(t),Z(t))_{0\leq t\leq T}\in
S^{2}(0,T)\times M^{2}(0,T).\]
\end{lemma}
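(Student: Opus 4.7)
The plan is to prove existence and uniqueness by a Banach fixed point argument on a suitable weighted space, in the spirit of the classical Pardoux--Peng argument but taking care to handle the path-dependence of $f$.

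First I would reduce the BSDE to a fixed point problem. Given any $(U, V) \in M^2(0,T) \times M^2(0,T)$, consider the $L^2$-martingale $M(t) := E\bigl[\xi + \int_0^T f(B_s, U(s), V(s))\,ds \,\big|\, \mathcal{F}_t\bigr]$. To make this well defined I first need to verify that $f(B_s, U(s), V(s)) \in M^2(0,T)$; this is the step where the path-dependence enters. Using the polynomial-growth/Lipschitz hypothesis, $|f(B_s,U(s),V(s))| \le |f(0_s, 0, 0)| + C(1 + \|B_s\|^q)\|B_s\| + C(|U(s)| + |V(s)|)$, and since $E[\sup_{s \le T}|B(s)|^{2q+2}] < \infty$ by the BDG/Doob inequalities, the map $(U,V) \mapsto f(B_\cdot, U, V)$ sends $M^2 \times M^2$ into $M^2$. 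Then by the martingale representation theorem for the Brownian filtration there exists a unique $\tilde Z \in M^2(0,T)$ with $M(t) = M(0) + \int_0^t \tilde Z(s)\,dB(s)$, and I set $\tilde Y(t) := M(t) - \int_0^t f(B_s, U(s), V(s))\,ds$. One checks that $(\tilde Y, \tilde Z) \in S^2(0,T) \times M^2(0,T)$ and solves the linear BSDE with driver $f(B_s, U(s), V(s))$. This defines the Picard operator $\Phi(U, V) = (\tilde Y, \tilde Z)$.

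Next I would establish the contraction. Take two inputs $(U_i, V_i)$ with images $(Y_i, Z_i)$, and write $\Delta U = U_1 - U_2$ etc. Applying Itô's formula to $e^{\beta t}|\Delta Y(t)|^2$ between $t$ and $T$, using $\Delta Y(T) = 0$, and taking expectations kills the stochastic integral and produces
\begin{equation*}
E\!\left[e^{\beta t}|\Delta Y(t)|^2\right] + E\!\int_t^T e^{\beta s}\bigl(\beta|\Delta Y(s)|^2 + |\Delta Z(s)|^2\bigr)ds = 2E\!\int_t^T e^{\beta s}\langle \Delta Y(s), \Delta f(s)\rangle ds,
\end{equation*}
where $\Delta f(s) = f(B_s, U_1(s), V_1(s)) - f(B_s, U_2(s), V_2(s))$. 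Since the path argument $B_s$ is the same on both sides, the Lipschitz condition in $(y,z)$ gives $|\Delta f(s)| \le C(|\Delta U(s)| + |\Delta V(s)|)$ with no path term surviving. Using $2ab \le \varepsilon a^2 + \varepsilon^{-1}b^2$ and choosing $\beta$ large enough, one gets
\begin{equation*}
E\!\int_0^T e^{\beta s}\bigl(|\Delta Y(s)|^2 + |\Delta Z(s)|^2\bigr)ds \le \tfrac{1}{2} E\!\int_0^T e^{\beta s}\bigl(|\Delta U(s)|^2 + |\Delta V(s)|^2\bigr)ds,
\end{equation*}
so $\Phi$ is a strict contraction on $M^2(0,T) \times M^2(0,T)$ under the norm $\|(U,V)\|_\beta^2 := E\int_0^T e^{\beta s}(|U|^2 + |V|^2)ds$. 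The Banach fixed point theorem yields a unique fixed point $(Y, Z)$, which by construction satisfies (\ref{BSDE1}), and $Y$ automatically lies in $S^2(0,T)$ by Doob's inequality applied to the martingale part.

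The only non-routine point, and the one I expect to require the most care, is the verification that the path-dependent driver $f(B_s, \cdot, \cdot)$ really is a legitimate driver for the classical theory: the polynomial-growth term $\|B_s\|^{q+1}$ is not Lipschitz in the path, but since this term is exogenous (it does not interact with $(Y,Z)$ and the same $B_s$ appears in both iterates of any contraction estimate), it plays only the role of an $L^2$ inhomogeneity, exactly as $f(s, 0, 0) \in M^2$ does in the standard Pardoux--Peng proof. After that observation the entire path-dependent structure becomes transparent to the fixed point argument and the result reduces to \cite{PP1}.
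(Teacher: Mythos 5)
Your proposal is correct and is essentially the classical Pardoux--Peng argument (martingale representation plus a Banach fixed point in the $e^{\beta t}$-weighted $M^2$ norm), which is exactly what the paper invokes: it gives no proof of its own and simply refers the reader to \cite{PP1}. Your additional observation --- that the polynomial-growth path term only enters as an $L^2$ inhomogeneity via $E[\sup_{s\le T}|B(s)|^{2q+2}]<\infty$, while the contraction only uses the uniform Lipschitz continuity in $(y,z)$ since the path argument $B_s$ is common to both iterates --- is precisely the point needed to reduce the path-dependent driver to the classical setting.
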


We also shall recall the following  basic result on BSDEs, which is
the well-known comparison theorem (see El Karoui, Peng, Quenez
\cite{EPQ}).
\begin{lemma}\label{comp}
(Comparison Theorem) We assume $m = 1$. Given two coefficients $f_1$
and $f_2$ satisfying the above assumptions and two terminal values
$\xi_1$, $\xi_2 \in L^{2}(\mathcal{F}_{t})$ , we denote by $(Y_1,
Z_1)$ and $(Y_2, Z_2)$ the solution of BSDE with the data $(\xi_1,
f_1)$ and $(\xi_2, f_2)$, respectively. Then we have:  If $\xi_1
\geq \xi_2$ and $f_1 \geq f_2 $, a.s., then $Y_1(t)\geq Y_2(t)$ ,
a.s., for all $t \in [0, T]$.
\end{lemma}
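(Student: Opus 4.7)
The plan is to use the standard linearization argument, which turns the comparison of two nonlinear BSDEs into the sign analysis of a single linear BSDE.

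First I would set $\hat{Y}(t) := Y_1(t) - Y_2(t)$, $\hat{Z}(t) := Z_1(t) - Z_2(t)$ and $\hat{\xi} := \xi_1 - \xi_2 \geq 0$. Subtracting the two BSDEs (\ref{BSDE1}), the pair $(\hat{Y}, \hat{Z})$ satisfies
\begin{equation*}
\hat{Y}(t) = \hat{\xi} + \int_t^T \bigl[f_1(B_s, Y_1(s), Z_1(s)) - f_2(B_s, Y_2(s), Z_2(s))\bigr]\, ds - \int_t^T \hat{Z}(s)\, dB(s).
\end{equation*}
The key algebraic step is the standard three-term decomposition of the driver difference:
\begin{equation*}
f_1(\cdot, Y_1, Z_1) - f_2(\cdot, Y_2, Z_2) = a(s)\hat{Y}(s) + \langle b(s), \hat{Z}(s)\rangle + g(s),
\end{equation*}
where $a(s)$ is the difference quotient of $f_1$ in $y$ (set to $0$ when $\hat Y=0$), $b(s)$ is the vector of difference quotients of $f_1$ in $z$, and $g(s) := f_1(B_s, Y_2(s), Z_2(s)) - f_2(B_s, Y_2(s), Z_2(s)) \geq 0$. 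The Lipschitz hypothesis on $f_1$ guarantees $|a|, |b| \leq C$, and in particular both are bounded and adapted.

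Next I would remove the linear $\hat Z$-term by a Girsanov change of measure. Since $b$ is bounded, the exponential martingale $\mathcal{E}(\int_0^\cdot b(s)\,dB(s))$ is a true martingale and defines an equivalent probability $\tilde{P}$ under which $\tilde{B}(t) := B(t) - \int_0^t b(s)\,ds$ is a Brownian motion. Under $\tilde P$ the equation reads
\begin{equation*}
\hat{Y}(t) = \hat{\xi} + \int_t^T \bigl[a(s)\hat{Y}(s) + g(s)\bigr]\, ds - \int_t^T \hat{Z}(s)\, d\tilde{B}(s).
\end{equation*}
Then I would apply Itô's formula to the process $\Gamma(t)\hat Y(t)$, where $\Gamma(t) := \exp(\int_0^t a(s)\,ds)$, and take conditional expectation under $\tilde{P}$ to obtain the explicit representation
\begin{equation*}
\hat{Y}(t) = \tilde{E}\!\left[\left.\Gamma(t)^{-1}\Gamma(T)\hat\xi + \int_t^T \Gamma(t)^{-1}\Gamma(s)\, g(s)\, ds\,\right|\mathcal{F}_t\right].
\end{equation*}
Since $\Gamma > 0$, $\hat\xi \geq 0$ and $g \geq 0$ almost surely, the right side is nonnegative, yielding $Y_1(t) \geq Y_2(t)$ a.s.

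The only delicate point is checking integrability so that the conditional expectation is well defined and that the stochastic integral under $\tilde P$ is a genuine martingale on $[t,T]$: this follows from the boundedness of $a,b$, the estimate $|g(s)| \leq C(1+\|B_s\|^q)$ from the polynomial Lipschitz assumption on $f$, and the fact that $(Y_i, Z_i) \in S^2 \times M^2$ combined with all $L^p$-moments of the Brownian path. I expect this to be entirely routine; the main conceptual content is the decomposition and Girsanov step above.
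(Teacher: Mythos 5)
The paper does not prove this lemma at all: it states it as a known result and cites El Karoui--Peng--Quenez for the proof. Your linearization--Girsanov argument is correct and is precisely the classical proof from that cited reference (three-term decomposition of the driver difference, change of measure to absorb the $\hat Z$-term, discounting by $\Gamma$, and positivity of the resulting conditional expectation), so there is nothing to compare against beyond noting that you have supplied the standard argument the paper delegates to the literature.
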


\section{Property of solution of the BSDE}

Let us first recall some notations from Pardoux and Peng \cite{Peng1}.

$C^{k}(\mathbb{R}^{p},\mathbb{R}^{q}),C^{k}_{b}(\mathbb{R}^{p},\mathbb{R}%
^{q}), C_{p}^{k}(\mathbb{R}^{p},\mathbb{R}^{q})$ will denote respectively the
set of functions of class $C^{k}$ from $\mathbb{R}^{p}$ into $\mathbb{R}^{q}$,
the set of functions the set of those functions of class $C^{k}$ whose partial
derivatives of order less than or equal to $k$ are bounded (and hence the
function itself grows at most linearly at infinity), and the set of those
functions of class $C^{k}$ which, together with all their partial derivatives
of order less than or equal to $k$, grow at most like a polynomial function of
the variable $x$ at infinity.

The following directional derivatives will be used frequently in the sequel.

\begin{definition}
If $\Phi$ is an $\mathbb{R}^{m}$-valued function on $\Lambda_{T}$.
$\Phi$ is said to be in $C^{2}(\Lambda_{T})$, if for each
$\gamma\in\Lambda_{T}$ and $t\in\lbrack0,T]$, there exist
$p_{1}\in\mathbb{R}^{d}$ and $p_{2}\in
\mathbb{R}^{d}\otimes\mathbb{R}^{d}$  such that $p_{2}$ is symmetric
and
\[
\Phi(\gamma_{\gamma_{t}^{x}})-\Phi(\gamma)=\left\langle p_{1},x\right\rangle
+\frac{1}{2}\left\langle p_{2},x\otimes x\right\rangle  +o(|x|^{2}%
),\ \ x\in\mathbb{R}^{d},
\]
where $\gamma_{\gamma_{t}^{x}}(r)=\gamma(r)I_{[0,t)}(r)+(\gamma(r)+x)I_{[t,T]}%
(r)$. We denote
\[
\Phi_{\gamma_{t}}^{\prime}(\gamma):=p_{1}\text{, }\Phi_{\gamma_{t}}%
^{\prime\prime}(\gamma):=p_{2}.
\]
$\Phi$ is said to be in $C_{l,lip}^{2}(\Lambda_{T})$ if $\Phi_{\gamma_{t}%
}^{\prime}(\gamma)$ and $\Phi_{\gamma_{t}}^{\prime\prime}(\gamma)$
exist for all $\gamma \in\Lambda_{T}, t\in [0,T]$ , and for some
constants $C,k\geq0$ depending only on $\Phi$,
\begin{align*}
&|\Phi(\gamma)-\Phi(\bar{\gamma})|\leq
C(1+\Vert\gamma\Vert^{k}+\Vert
\bar{\gamma}\Vert^{k})\Vert\gamma-\bar{\gamma}\Vert,\ \ \
\gamma,\bar{\gamma }\in\Lambda_{T},\\&
|\Psi_{\gamma_{t}}(\gamma)-\Psi_{\bar{\gamma}_{s}}(\bar{\gamma})|\leq
C(1+\Vert\gamma\Vert^{k}+\Vert\bar{\gamma}\Vert^{k})(|t-s|+\Vert\gamma
-\bar{\gamma}\Vert),\ \ \
\gamma,\bar{\gamma}\in\Lambda_{T},t,s\in\lbrack0,T],
\end{align*}
with $\Psi_{\gamma_{t}}(\gamma)=\Phi_{\gamma_{t}}^{\prime}(\gamma
),\Phi_{\gamma_{t}}^{\prime\prime}(\gamma)$. Analogous, we can
define $C^{2}(\Lambda_{t})$, $C_{l,lip}^{2}(\Lambda_{t})$,
$C_{l,lip}^{1}(\Lambda_{t})$.
\end{definition}

In this rest of this paper we shall make use of the following assumptions on
the generator $f$ and the terminal $\Phi$ of our BSDE:

(\textbf{H1}): Suppose $\Phi$ is an $\mathbb{R}^{m}$-valued function
on $\Lambda_{T}$.\ Moreover, $\Phi$ is in
$C_{l,lip}^{2}(\Lambda_{T})$.

(\textbf{H2}): Suppose $f(\gamma_{t},y,z)$ is a given continuous
function on $\Lambda\times\mathbb{R}^{m}\times\mathbb{R}^{m\times
d}$.  For any $\gamma_{t}\in\Lambda$ and $s\in\lbrack0,t]$,
$(x,y,z)\mapsto f((\gamma_{t})_{\gamma_{s}^{x}},y,z)$ is of class $C_{p}%
^{3}(\mathbb{R}^{d}\times\mathbb{R}^{m}\times\mathbb{R}^{m\times d}%
,\mathbb{R}^{m})$ and the first order partial derivatives as well as
their derivatives of up to order two with respect to $(y,z)$ are
uniformly bounded; for any
$(y,z)$, $\gamma_{t}\mapsto f(\gamma_{t},y,z)$ is of class $C_{l,lip}%
^{2}(\Lambda_{t})$, $\gamma_{t}\mapsto f_y(\gamma_{t},y,z), f_z(\gamma_{t},y,z) $ is of class $C_{l,lip}%
^{1}(\Lambda_{t})$, $\gamma_{t}\mapsto f_{yy}(\gamma_{t},y,z),
f_{zz}(\gamma_{t},y,z), f_{yz}(\gamma_{t},y,z) $ is of class
$C_{l,lip}(\Lambda_{t})$.

 (\textbf{H3}): Suppose
$f(\gamma_{t},y,z)=\bar{f}(t,\gamma_t(t),y,z)$, where
$\bar{f}:[0,T]\times\mathbb{R}^{d}\times\mathbb{R}^{m}\times
\mathbb{R}^{m\times d}\mapsto\mathbb{R}^{m}$ is such that
$(t,x,y,z)\mapsto \bar{f}(t,x,y,z)$ is of class
$C_{p}^{0,3}([0,T]\times\mathbb{R}^{d}\times
\mathbb{R}^{m}\times\mathbb{R}^{m\times d},\mathbb{R}^{m})$ and the
first order partial derivatives as well as their derivatives of up
to order two with respect to $(y,z)$ are uniformly bounded.

It is obvious that assumption $(\mathbf{H}3)$ implies assumption
$(\mathbf{H}2)$.

Assume $(\mathbf{H}1)$ and $(\mathbf{H}2)$ hold. For any $\gamma_{t}%
=(\gamma(s))_{0\leq s\leq t}\in\Lambda$, let $(Y_{{\gamma_{t}}}(s),Z_{{\gamma
_{t}}}(s))_{t\leq s\leq T}$ denote the unique element of $S^{2}[t,T]\times
M^{2}[t,T]$ which solves the following BSDE:
\begin{equation}
Y_{{\gamma_{t}}}(s)=\Phi(B^{\gamma_{t}})+\int_{s}^{T}f(B_{r}^{\gamma_{t}%
},Y_{{\gamma_{t}}}(r),Z_{{\gamma_{t}}}(r))dr-\int_{s}^{T}Z_{{\gamma_{t}}%
}(r)dB(r), \label{p1}%
\end{equation}
where,%
\[
B^{\gamma_{t}}(u):=\gamma_{t}(u)I_{[0,t]}(u)+(\gamma_{t}%
(t)+B(u)-B(t))I_{(t,T]}(u).
\]
It is clear, for $\gamma_{t},\bar{\gamma}_{\bar{t}}\in\Lambda$ with $\bar
{t}\geq t$, we have
\begin{align*}
B^{\gamma_{t}}(u)-B^{\bar{\gamma}_{\bar{t}}}(u)  &  =I_{[0,t)}(u)(\gamma
_{t}(u)-\bar{\gamma}_{\bar{t}}(u))+I_{[t,\bar{t})}(u)(\gamma_{t}%
(t)-\bar{\gamma}_{\bar{t}}(u)+B(u)-B(t))\\
&  +I_{[\bar{t},T]}(u)(\gamma_{t}(t)+B(\bar{t})-B(t)-\bar{\gamma}(\bar{t})).
\end{align*}

It follows easily from the existence result in \cite{PP2}:

\begin{corollary}
For each $\gamma_{t}\in\Lambda$, the BSDE (\ref{p1}) has a unique solution
$(Y_{{\gamma_{t}}}(s),Z_{{\gamma_{t}}}(s))_{t\leq s\leq T}$ and $Y_{{\gamma
_{t}}}(t)$ defines a deterministic mapping from ${\Lambda}$ to $\mathbb{R}%
^{m}$.
\end{corollary}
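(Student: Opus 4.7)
The plan is to reduce the statement to a direct application of the Pardoux--Peng existence/uniqueness lemma stated earlier, and then observe that the initial value of an adapted BSDE solution driven by the shifted Brownian motion must be deterministic because the filtration is trivial at the starting time.

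First, I would verify that the data of the BSDE (\ref{p1}), viewed as a BSDE on the time interval $[t,T]$ with respect to the filtration $(\mathcal{F}_s^t)_{s\in[t,T]}$ generated by the increments of $B$ after $t$, satisfies the hypotheses of the cited lemma. The terminal condition $\Phi(B^{\gamma_t})$ is $\mathcal{F}_T^t$-measurable by construction, and by assumption (\textbf{H1}) we have the polynomial-growth bound $|\Phi(\gamma)|\leq |\Phi(0)|+C(1+\|\gamma\|^k)\|\gamma\|$, so combining this with the standard moment estimates $E[\sup_{s\leq T}|B(s)|^{2(k+1)}]<\infty$ gives $\Phi(B^{\gamma_t})\in L^2(\mathcal{F}_T^t)$ for any fixed $\gamma_t\in\Lambda$. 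The driver $(s,y,z)\mapsto f(B_s^{\gamma_t},y,z)$ is $\mathcal{F}_s^t$-progressively measurable by continuity of $f$ in $\gamma_t$, and assumption (\textbf{H2}) supplies a global Lipschitz bound in $(y,z)$ (since the first-order $(y,z)$-derivatives are uniformly bounded), while $f(B_s^{\gamma_t},0,0)$ inherits polynomial growth in the Brownian path, hence lies in $M^2(t,T)$. These are precisely the conditions of the Pardoux--Peng lemma applied to the shifted Brownian motion on $[t,T]$, and we obtain a unique pair $(Y_{\gamma_t}(\cdot),Z_{\gamma_t}(\cdot))\in S^2(t,T)\times M^2(t,T)$.

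For the second assertion, note that $Y_{\gamma_t}(t)$ is $\mathcal{F}_t^t$-measurable. By the very definition $\mathcal{F}_r^t=\sigma\{\sigma(B(s)-B(t);t\leq s\leq r)\vee\mathcal{N}\}$, the $\sigma$-algebra $\mathcal{F}_t^t$ contains only the $P$-null sets and their complements, so any $\mathcal{F}_t^t$-measurable random variable is $P$-a.s.\ equal to a constant. Consequently $Y_{\gamma_t}(t)$ may be identified with a deterministic element of $\mathbb{R}^m$ depending only on $\gamma_t\in\Lambda$, giving the claimed mapping.

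The only point that requires any care is the $L^2$-integrability of the terminal value and of $f(B_\cdot^{\gamma_t},0,0)$; everything else is an immediate quotation of earlier results. No genuine obstacle arises here because (\textbf{H1})--(\textbf{H2}) are stated in the polynomial-growth $C^2_{l,lip}$ form precisely so that these integrability checks are automatic.
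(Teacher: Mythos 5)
Your proposal is correct and follows exactly the route the paper intends: the paper gives no proof beyond the remark that the corollary ``follows easily from the existence result'' of Pardoux--Peng, and your argument simply supplies the routine verifications (square-integrability of $\Phi(B^{\gamma_t})$ and of $f(B^{\gamma_t}_\cdot,0,0)$ from the polynomial-growth bounds in $(\mathbf{H1})$--$(\mathbf{H2})$, global Lipschitz continuity in $(y,z)$, and $P$-triviality of $\mathcal{F}^t_t$ for the deterministic character of $Y_{\gamma_t}(t)$) that the authors leave implicit.
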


\subsection{ Regularity of solution of the BSDE}

We next establish higher order moment estimates for the solution of BSDE
(\ref{p1}). For each $z\in\mathbb{R}^{m\times d}$, we denote $\Vert
z\Vert=\sqrt{tr(zz^{\ast})}$.

\begin{lemma}
For any $p\geq2$, there exist $C_{p}$ and $q$ depending on $C,T,k,p$, such
that:
\begin{align}
E[\sup\limits_{s\in\lbrack t,T]}|Y_{{\gamma_{t}}}(s)|^{p}]  &  \leq
C_{p}(1+\Vert\gamma_{t}\Vert^{q}),\\
E[|\int_{t}^{T}\Vert Z_{{\gamma_{t}}}(s)\Vert^{2}ds|^{\frac{p}{2}}]  &  \leq
C_{p}(1+\Vert\gamma_{t}\Vert^{q}).
\end{align}
\label{w4}
\end{lemma}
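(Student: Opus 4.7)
The plan is to reduce the estimate to the classical $L^p$ a priori estimate for Lipschitz BSDEs, after controlling the data $\Phi(B^{\gamma_t})$ and $f(B^{\gamma_t}_\cdot,0,0)$ in every $L^p$ with polynomial dependence on $\Vert\gamma_t\Vert$.

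First I would bound the driving path. From the explicit form of $B^{\gamma_t}$ one has
$$\Vert B^{\gamma_t}\Vert \leq \Vert\gamma_t\Vert + \sup_{u\in[t,T]}|B(u)-B(t)|,$$
so for every $r\geq 1$, the Burkholder--Davis--Gundy (or simply Doob's) inequality gives $E[\Vert B^{\gamma_t}\Vert^r]\leq C_r(1+\Vert\gamma_t\Vert^r)$. Next, (H1) and $\Phi\in C^2_{l,lip}(\Lambda_T)$ yield the polynomial growth
$$|\Phi(\gamma)|\leq |\Phi(0)|+C(1+\Vert\gamma\Vert^k)\Vert\gamma\Vert\leq C(1+\Vert\gamma\Vert^{k+1}).$$
Similarly, (H2) combines uniformly bounded derivatives in $(y,z)$ with the $C^2_{l,lip}$ growth in the path to give
$$|f(\gamma_t,y,z)|\leq C(1+\Vert\gamma_t\Vert^{q_0})+L(|y|+|z|)$$
for some exponent $q_0$ depending on the constants in (H2). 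Composing with $B^{\gamma_t}$ and combining with the moment bound above, both $\xi:=\Phi(B^{\gamma_t})$ and $\int_t^T|f(B^{\gamma_t}_s,0,0)|\,ds$ lie in every $L^p$ with norm bounded by $C_p(1+\Vert\gamma_t\Vert^{q})$ for a suitable exponent $q$.

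Finally, I would invoke the standard $L^p$ a priori estimate for BSDEs with Lipschitz generator, which is obtained by applying It\^o's formula to $|Y(s)|^p$ combined with Burkholder--Davis--Gundy and Young's inequalities (the technique goes back to \cite{PP1}): for every $p\geq 2$, if $(Y,Z)$ solves a BSDE on $[t,T]$ with terminal $\xi$ and generator $g$ Lipschitz in $(y,z)$, then
$$E\Big[\sup_{s\in[t,T]}|Y(s)|^p\Big]+E\Big[\Big(\int_t^T\Vert Z(s)\Vert^2\,ds\Big)^{p/2}\Big]\leq C_p\, E\Big[|\xi|^p+\Big(\int_t^T|g(s,0,0)|\,ds\Big)^p\Big].$$
Applying this with $g(s,y,z)=f(B^{\gamma_t}_s,y,z)$ and $\xi=\Phi(B^{\gamma_t})$, and inserting the polynomial growth bounds from the previous step, yields both inequalities stated in the lemma.

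The only genuine obstacle is the bookkeeping: one must track how the final exponent $q$ is amplified by $p$ and by the exponents $k,q_0$ coming from (H1)--(H2). There is no conceptual difficulty, since (H1) and (H2) are tailored so that all data entering the BSDE have polynomial tails in $\Vert\gamma_t\Vert$, and the Lipschitz $L^p$ theory for BSDEs then transports this polynomial growth to $(Y_{\gamma_t},Z_{\gamma_t})$.
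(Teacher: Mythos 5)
Your proposal is correct and takes essentially the same route as the paper: the paper simply carries out by hand the derivation of the standard $L^p$ a priori estimate that you invoke as a black box (It\^{o}'s formula applied to $|Y|^{p}$, then Young, Gronwall and Burkholder--Davis--Gundy), and then feeds in the terminal value and driver. The polynomial moment bounds on $\Phi(B^{\gamma_{t}})$ and $\int_{t}^{T}|f(B_{r}^{\gamma_{t}},0,0)|^{p}dr$ via $\Vert B^{\gamma_{t}}\Vert\leq\Vert\gamma_{t}\Vert+\sup_{u\in[t,T]}|B(u)-B(t)|$, which you spell out, are exactly the step the paper leaves implicit at the end of its argument.
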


\begin{proof}
Applying It\^{o}'s formula to $\varphi(x)=|x|^{p}$ yields that,%

\begin{align*}
&  |Y_{{\gamma_{t}}}(s)|^{p}+\frac{p}{2}\int_{s}^{T}|Y_{{\gamma_{t}}%
}(r)|^{p-2}\Vert Z_{{\gamma_{t}}}(r)\Vert^{2}dr+\frac{p}{2}(p-2)\int_{s}%
^{T}|Y_{\gamma_{t}}(r)|^{p-4}\langle Z_{\gamma_{t}}Z_{\gamma_{t}}^{\ast
}Y_{\gamma_{t}},Y_{\gamma_{t}}\rangle(r)dr\\
=  &  |\Phi(B^{\gamma_{t}})|^{p}+p\int_{s}^{T}|Y_{{\gamma_{t}}}%
(r)|^{p-2}\langle Y_{{\gamma_{t}}}(r),f(B_{r}^{\gamma_{t}},Y_{{\gamma_{t}}%
}(r),Z_{{\gamma_{t}}}(r))\rangle dr-p\int_{s}^{T}|Y_{{\gamma_{t}}}%
(r)|^{p-2}\langle Y_{{\gamma_{t}}}(r),Z_{{\gamma_{t}}}(r)dB(r)\rangle.\
\end{align*}
After localization, we get that
\begin{align*}
&  E[|Y_{{\gamma_{t}}}(s)|^{p}+\frac{p}{2}(p-1)\int_{s}^{T}|Y_{{\gamma_{t}}%
}(r)|^{p-2}\Vert Z_{{\gamma_{t}}}(r)\Vert^{2}dr]\\
\leq &  E[|\Phi(B^{\gamma_{t}})|^{p}+p\int_{s}^{T}|Y_{{\gamma_{t}}}%
(r)|^{p-2}\langle Y_{{\gamma_{t}}}(r),f(B_{r}^{\gamma_{t}},Y_{{\gamma_{t}}%
}(r),Z_{{\gamma_{t}}}(r))\rangle dr].
\end{align*}
From H\"older's inequality and Young's inequality, we have for any
$s\leq T$ and any $\delta>0$ that,
\begin{align*}
&  \int_{s}^{T}|Y_{{\gamma_{t}}}(r)|^{p-2}\langle Y_{{\gamma_{t}}}%
(r),f(B_{r}^{\gamma_{t}},Y_{{\gamma_{t}}}(r),Z_{{\gamma_{t}}}(r))\rangle dr\\
\leq &  \int_{s}^{T}|Y_{{\gamma_{t}}}(r)|^{p-1}|f(B_{r}^{\gamma_{t}%
},0,0)|dr+C\int_{s}^{T}|Y_{{\gamma_{t}}}(r)|^{p-2}|Y_{{\gamma_{t}}}(r)|\Vert
Z_{{\gamma_{t}}}(r)\Vert dr\\
\leq &  (\frac{p-1}{p}+\frac{C}{\delta})\int_{s}^{T}|Y_{{\gamma_{t}}}%
(r)|^{p}dr+\frac{1}{p}\int_{s}^{T}|f(B_{r}^{\gamma_{t}},0,0)|^{p}dr+\delta
C\int_{s}^{T}|Y_{{\gamma_{t}}}(r)|^{p-2}\Vert Z_{{\gamma_{t}}}(r)\Vert^{2}dr.
\end{align*}
Hence,
\begin{align*}
&  E[|Y_{{\gamma_{t}}}(s)|^{p}+(\frac{p}{2}(p-1)-\delta Cp)\int_{s}%
^{T}|Y_{{\gamma_{t}}}(r)|^{p-2}\Vert Z_{{\gamma_{t}}}(r)\Vert^{2}dr]\\
\leq &  E[|\Phi(B^{\gamma_{t}})|^{p}+\frac{1}{p}\int_{s}^{T}|f(B_{r}%
^{\gamma_{t}},0,0)|^{p}dr+(p-1+\frac{Cp}{\delta})\int_{s}^{T}|Y_{{\gamma_{t}}%
}(r)|^{p}dr].
\end{align*}
Choosing $\delta$ small enough such that $\frac{p}{2}(p-1)-\delta Cp>0$ and
applying Gronwall's inequality implies that
\[
\sup\limits_{s\in\lbrack t,T]}E[|Y_{{\gamma_{t}}}(s)|^{p}+\int_{s}%
^{T}|Y_{{\gamma_{t}}}(r)|^{p-2}\Vert Z_{{\gamma_{t}}}(r)\Vert^{2}dr]\leq
C_{p}^{1}(1+\Vert\gamma_{t}\Vert^{q}).
\]
On the other hand, still using the first equality of this proof and choosing
$\delta$ appropriately, we deduce the existence of a constant $C_{p}^{2}$ such
that
\begin{align*}
|Y_{{\gamma_{t}}}(s)|^{p}\leq &  |\Phi(B^{\gamma_{t}})|^{p}+C_{p}^{2}\int%
_{s}^{T}(|Y_{{\gamma_{t}}}(r)|^{p}+|f(B_{r}^{\gamma_{t}},0,0)|^{p})dr\\
&  -p\int_{s}^{T}|Y_{{\gamma_{t}}}(r)|^{p-2}\langle Y_{{\gamma_{t}}%
}(r),Z_{{\gamma_{t}}}(r)dB(r)\rangle.
\end{align*}
Hence, from Burkholder-Davis-Gundy's inequality,
\begin{align*}
E[\sup\limits_{s\in\lbrack t,T]}|Y_{{\gamma_{t}}}(s)|^{p}]\leq &
E[|\Phi(B^{\gamma_{t}})|^{p}+C_{p}^{2}\int_{t}^{T}|Y_{{\gamma_{t}}}%
(r)|^{p}+|f(B_{r}^{\gamma_{t}},0,0)|^{p}dr]\\
&  +C_{p}^{\prime}\sqrt{E[\int_{t}^{T}|Y_{{\gamma_{t}}}(r)|^{2p-2}\Vert
Z_{{\gamma_{t}}}(r)\Vert^{2}dr]}.
\end{align*}
Then we can get
\[
E[\sup\limits_{s\in\lbrack t,T]}|Y_{{\gamma_{t}}}(s)|^{p}]\leq C_{p}%
(1+\Vert\gamma_{t}\Vert^{q}).
\]
Now according to It\^{o}'s formula, we have
\begin{align*}
\int_{t}^{T}\Vert Z_{{\gamma_{t}}}(r)\Vert^{2}dr=  &  |\Phi(B^{\gamma_{t}%
})|^{2}-|Y_{{\gamma_{t}}}(t)|^{2}+2\int_{t}^{T}\langle Y_{{\gamma_{t}}%
}(r),f(B_{r}^{\gamma_{t}},Y_{{\gamma_{t}}}(r),Z_{{\gamma_{t}}}(r))\rangle dr\\
&  -2\int_{s}^{T}\langle Y_{{\gamma_{t}}}(r),Z_{{\gamma_{t}}}(r)dB(r)\rangle,
\end{align*}
thus
\begin{align*}
E[|\int_{t}^{T}\Vert Z_{{\gamma_{t}}}(r)\Vert^{2}dr|^{\frac{p}{2}}]\leq &
C_{p}^{3}E[|\Phi(B^{\gamma_{t}})|^{p}+|Y_{{\gamma_{t}}}(t)|^{p}+|\int_{t}%
^{T}\langle Y_{{\gamma_{t}}}(r),f(B_{r}^{\gamma_{t}},Y_{{\gamma_{t}}%
}(r),Z_{{\gamma_{t}}}(r))\rangle dr|^{\frac{p}{2}}\\
&  +|\int_{t}^{T}(Y_{{\gamma_{t}}}(r),Z_{{\gamma_{t}}}(r)dB(r))|^{\frac{p}{2}%
}].
\end{align*}
From H\"older's inequality and Young's inequality
\begin{align*}
&  E[|\int_{t}^{T}\langle Y_{{\gamma_{t}}}(r),f(B_{r}^{\gamma_{t}}%
,Y_{{\gamma_{t}}}(r),Z_{{\gamma_{t}}}(r))\rangle dr|^{\frac{p}{2}}]\\
\leq &  C_{p}^{4}E[|\int_{t}^{T}|f(B_{r}^{\gamma_{t}},0,0)|^{p}dr+\sup
\limits_{s\in\lbrack t,T]}|Y_{{\gamma_{t}}}(s)|^{p}+(\int_{t}^{T}%
|Y_{{\gamma_{t}}}(s)|\Vert Z_{{\gamma_{t}}}(s)\Vert ds)^{\frac{p}{2}}].
\end{align*}
For any $\delta>0$,
\begin{align*}
E[\int_{t}^{T}  &  \Vert Z_{{\gamma_{t}}}(r)\Vert^{2}dr]\leq C_{p}^{4}%
E[|\int_{t}^{T}|f(B_{r}^{\gamma_{t}},0,0)|^{p}dr+(1+\frac{1}{2\delta}%
)\sup\limits_{s\in\lbrack t,T]}|Y_{{\gamma_{t}}}(s)|^{p}\\
+  &  \delta(\int_{t}^{T}\Vert Z_{{\gamma_{t}}}(s)\Vert^{2}ds)^{\frac{p}{2}%
}+|\Phi(B^{\gamma_{t}})|^{p}+|Y_{{\gamma_{t}}}(t)|^{p}+(\int_{t}%
^{T}|Y_{{\gamma_{t}}}(s)|^{2}\Vert Z_{{\gamma_{t}}}(s)\Vert^{2}ds)^{\frac
{p}{4}}]\\
\leq &  C_{p,\delta}^{\prime}(1+\Vert\gamma_{t}\Vert^{q}))+\delta
(1+\delta)C_{p}^{4}E[(\int_{t}^{T}\Vert Z_{{\gamma_{t}}}(s)\Vert^{2}%
ds)^{\frac{p}{2}}].
\end{align*}
Choosing $\delta$ small enough to ensure $(1+\delta)\delta C_{p}^{4}<1$ thus%
\[
E[\int_{t}^{T}\Vert Z_{{\gamma_{t}}}(r)\Vert^{2}dr]\leq C_{p}(1+\Vert
\gamma_{t}\Vert^{q}),
\]
which completes the proof.
\end{proof}

We immediately have
\begin{corollary} Assuming $h$ is in $M^{2}(t,T)$, let $(Y_{{\gamma_{t}}}(s),Z_{{\gamma _{t}}}(s))_{t\leq s\leq T}$
be the adapted solution to  the  BSDE:
\begin{equation*}
Y_{{\gamma_{t}}}(s)=\Phi(B^{\gamma_{t}})+\int_{s}^{T}[f(B_{r}^{\gamma_{t}%
},Y_{{\gamma_{t}}}(r),Z_{{\gamma_{t}}}(r))+h(r)]dr-\int_{s}^{T}Z_{{\gamma_{t}}%
}(r)dB(r).
\end{equation*}
Then for any $p\geq2$, there exist $C_{p}$ and $q$ depending on
$C,T,k,p$, such that:
\begin{align}
E[\sup\limits_{s\in\lbrack t,T]}|Y_{{\gamma_{t}}}(s)|^{p}]  &  \leq
C_{p}(1+\Vert\gamma_{t}\Vert^{q}+\int^T_t|h(s)|^pds),\\
E[|\int_{t}^{T}\Vert Z_{{\gamma_{t}}}(s)\Vert^{2}ds|^{\frac{p}{2}}]
&  \leq C_{p}(1+\Vert\gamma_{t}\Vert^{q}+\int^T_t|h(s)|^pds).
\end{align}
\label{corol1}
\end{corollary}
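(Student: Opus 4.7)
The plan is to rerun the proof of Lemma \ref{w4} with the modified generator $\tilde{f}(\gamma_r, y, z, r) := f(\gamma_r, y, z) + h(r)$, treating $h$ as an exogenous adapted forcing term independent of $(y, z)$. Since $h$ does not affect the Lipschitz constants of the generator in $(y, z)$, the only change to the inputs of the proof is that the zero-argument bound becomes $|\tilde{f}(B_r^{\gamma_t}, 0, 0)|^p \leq 2^{p-1}(|f(B_r^{\gamma_t}, 0, 0)|^p + |h(r)|^p)$. Consequently every estimate in the earlier proof carries through verbatim, with a single extra additive term $E[\int_t^T |h(r)|^p dr]$ appearing in the final right-hand side.

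Concretely, I would apply It\^o's formula to $\varphi(x) = |x|^p$ and obtain the same identity as in the proof of Lemma \ref{w4}, but with the inner product against $f(B_r^{\gamma_t}, Y_{\gamma_t}(r), Z_{\gamma_t}(r))$ replaced by the inner product against $f(B_r^{\gamma_t}, Y_{\gamma_t}(r), Z_{\gamma_t}(r)) + h(r)$. In the subsequent H\"older--Young estimation of the driver contribution, the new piece $p|Y_{\gamma_t}(r)|^{p-1}|h(r)|$ is split via Young's inequality as $(p-1)|Y_{\gamma_t}(r)|^p + |h(r)|^p$, so that after taking expectation the $|h|^p$ portion contributes the desired $E[\int_t^T |h(r)|^p dr]$ and the remaining $|Y_{\gamma_t}|^p$ term is absorbed into the Gronwall step. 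The growth bound $E[|f(B_r^{\gamma_t}, 0, 0)|^p] \leq C(1 + \|\gamma_t\|^q)$ from the assumptions on $f$ then gives the pointwise-in-$s$ estimate, and Burkholder--Davis--Gundy applied to the martingale part upgrades it to the $\sup_s$ estimate.

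For the $Z$ bound I would apply It\^o's formula to $|Y_{\gamma_t}|^2$, solve for $\int_t^T \|Z_{\gamma_t}(r)\|^2 dr$, raise to the $p/2$-th power, and apply H\"older/Young together with the $Y$ estimate just obtained; the $h$-contribution is absorbed by the same Young-type splitting. The argument is genuinely routine because $h$ enters affinely and independently of $(Y, Z)$, so there is no real analytic obstacle. The one point requiring attention is the choice of the small parameter $\delta$ in Young's inequality: it must simultaneously keep the $\delta\, C_p\, E[(\int_t^T \|Z\|^2\, ds)^{p/2}]$ self-absorption coefficient strictly below $1$ and leave the $|h|^p$ term unabsorbed, so that the final inequality can be solved for $E[(\int_t^T \|Z\|^2\, ds)^{p/2}]$ in terms of $1 + \|\gamma_t\|^q + E[\int_t^T |h(s)|^p ds]$.
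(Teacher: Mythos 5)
Your proposal is correct and matches the paper's intent: the paper offers no written proof (it states the corollary with ``We immediately have''), precisely because the argument is the routine rerun of the proof of Lemma \ref{w4} with the driver $f+h$ that you describe, the only change being the extra $|h(r)|^{p}$ contribution from the zero-argument bound, handled by the same Young/Gronwall/BDG steps. The one cosmetic remark is that since $h$ is a process in $M^{2}(t,T)$, the additive term on the right-hand side should be read as $E[\int_{t}^{T}|h(s)|^{p}ds]$, which is the form you correctly produce.
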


We need the regularity properties of the solution of BSDE with respect to the
\textquotedblleft parameter\textquotedblright\ $\gamma_{t}$. For convenience,
we define $Y_{{\gamma_{t}}}(s),Z_{{\gamma_{t}}}(s)$ for any $t,s\in
\lbrack0,T],\gamma_{t}\in\Lambda$ by $Y_{{\gamma_{t}}}(s)=Y_{{\gamma_{t}}%
}(s\vee t)$, while $Z_{{\gamma_{t}}}(s)=0$ for $s<t$.

\begin{proposition}
\label{w5} For any $p\geq2$, there exist $C_{p}$ and $q$ depending on
$C,T,k,p$, such that for any $t,\bar{t}\in\lbrack0,T]$, and $\gamma_{t}%
,\bar{\gamma}_{\bar{t}}\in\Lambda$, $h,\bar{h}\in\mathbb{R}\backslash\{0\}$,%
\begin{align*}
&  \text{(i)}\,\,\,\,\,E[\sup\limits_{u\in\lbrack0,T]}|Y_{\gamma_{t}%
}(u)-Y_{\bar{\gamma}_{\bar{t}}}(u)|^{p}]\leq C_{p}(1+\Vert\gamma_{t}\Vert
^{q}+\Vert\bar{\gamma}_{\bar{t}}\Vert^{q})(|d_{\infty}(\gamma_{t},\bar{\gamma
}_{\bar{t}})|^{p}+|t-\bar{t}|^{\frac{p}{2}}),\\
&  \text{(ii)}\,\,\,\,E[|\int_{0}^{T}\Vert Z_{{\gamma_{t}}}(u)-Z_{{\bar
{\gamma}_{\bar{t}}}}(u)\Vert^{2}du|^{\frac{p}{2}}]\leq C_{p}(1+\Vert\gamma
_{t}\Vert^{q}+\Vert\bar{\gamma}_{\bar{t}}\Vert^{q})(|d_{\infty}(\gamma
_{t},\bar{\gamma}_{\bar{t}})|^{p}+|t-\bar{t}|^{\frac{p}{2}}),\\
&  \text{(iii)\thinspace\ }E[\sup\limits_{u\in\lbrack0,T]}|\Delta_{h}%
^{i}Y_{{\gamma_{t}}}(u)-\Delta_{\bar{h}}^{i}Y_{\bar{\gamma}_{\bar{t}}}%
(u)|^{p}]\\
&  \,\,\,\,\,\,\,\,\,\,\,\,\,\,\,\leq C_{p}(1+\Vert\gamma_{t}\Vert^{q}%
+\Vert\bar{\gamma}_{\bar{t}}\Vert^{q}+|h|^{q}+|\bar{h}|^{q})(|h-\bar{h}%
|^{p}+|d_{\infty}(\gamma_{t},\bar{\gamma}_{\bar{t}})|^{p}+|t-\bar{t}%
|^{\frac{p}{2}}),\\
&  \text{(iv)}\,\,E[|\int_{0}^{T}\Vert\Delta_{h}^{i}Z_{{\gamma_{t}}}%
(u)-\Delta_{\bar{h}}^{i}Z_{{\bar{\gamma}_{\bar{t}}}}(u)^{2}\Vert du|^{\frac
{p}{2}}]\\
&  \,\,\,\,\,\,\,\,\,\,\,\,\,\,\,\leq C_{p}(1+\Vert\gamma_{t}\Vert^{q}%
+\Vert\bar{\gamma}_{\bar{t}}\Vert^{q}+|h|^{q}+|\bar{h}|^{q})(|h-\bar{h}%
|^{p}+|d_{\infty}(\gamma_{t},\bar{\gamma}_{\bar{t}})|^{p}+|t-\bar{t}%
|^{\frac{p}{2}}),
\end{align*}
where $\Delta_{h}^{i}Y_{{\gamma_{t}}}(s)=\frac{1}{h}(Y_{{\gamma_{t}^{he_{i}}}%
}(s)-Y_{{\gamma_{t}}}(s))$, $\Delta_{h}^{i}Z_{{\gamma_{t}}}(s)=\frac{1}%
{h}(Z_{{\gamma_{t}^{he_{i}}}}(s)-Z_{{\gamma_{t}}}(s))$ and $(e_{1}%
,\cdots,e_{m})$ is an orthonormal basis of $\mathbb{R}^{m}$.
\end{proposition}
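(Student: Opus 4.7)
The plan is to derive all four estimates from standard BSDE stability applied to suitably linearized equations, where the linearization exploits the Lipschitz and $C^2$ hypotheses (H1), (H2) on $\Phi$ and $f$. I will treat (i), (ii) first, then bootstrap to (iii), (iv) by working with the difference-quotient BSDE.

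For (i) and (ii), assume WLOG $t\leq \bar t$. Set $\hat Y = Y_{\gamma_t}-Y_{\bar\gamma_{\bar t}}$ and $\hat Z = Z_{\gamma_t}-Z_{\bar\gamma_{\bar t}}$. On $[\bar t,T]$ both processes satisfy BSDEs and their difference solves a BSDE with terminal $\Phi(B^{\gamma_t})-\Phi(B^{\bar\gamma_{\bar t}})$ whose driver, by the Lipschitz hypothesis on $f$ in $(y,z)$, can be written as $\alpha(s)\hat Y(s)+\beta(s)\hat Z(s)+h(s)$ with $\alpha,\beta$ bounded and $h(s)=f(B^{\gamma_t}_s,Y_{\bar\gamma_{\bar t}}(s),Z_{\bar\gamma_{\bar t}}(s))-f(B^{\bar\gamma_{\bar t}}_s,Y_{\bar\gamma_{\bar t}}(s),Z_{\bar\gamma_{\bar t}}(s))$. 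Using the polynomial-Lipschitz property of $f$ in $\gamma_t$ and the path decomposition for $B^{\gamma_t}-B^{\bar\gamma_{\bar t}}$ spelled out in the excerpt, one sees that $\sup_u|B^{\gamma_t}(u)-B^{\bar\gamma_{\bar t}}(u)|\leq d_\infty(\gamma_t,\bar\gamma_{\bar t})+\sup_{u\in[t,\bar t]}|B(u)-B(t)|$, whose $p$-th moment is bounded by $C_p(|d_\infty|^p+|\bar t-t|^{p/2})$ by the Brownian modulus. Lemma \ref{w4} controls $Y_{\bar\gamma_{\bar t}}$ and $Z_{\bar\gamma_{\bar t}}$ in polynomial $L^p$, so Corollary \ref{corol1} applied to the linearized equation yields (i), (ii) restricted to $[\bar t,T]$. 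On the short interval $[t,\bar t]$, $Y_{\bar\gamma_{\bar t}}$ is frozen at $Y_{\bar\gamma_{\bar t}}(\bar t)$ while $Y_{\gamma_t}$ still runs; another application of the a priori estimate over $[t,\bar t]$ absorbs the gap and produces the $|t-\bar t|^{p/2}$ piece. Putting the two intervals together gives (i) and (ii).

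For (iii) and (iv), note that both $Y_{\gamma_t^{he_i}}$ and $Y_{\gamma_t}$ solve BSDEs with the same Brownian motion, so subtracting and dividing by $h$ shows that $(\Delta_h^iY_{\gamma_t},\Delta_h^iZ_{\gamma_t})$ satisfies a linear BSDE with terminal $\frac{1}{h}[\Phi(B^{\gamma_t^{he_i}})-\Phi(B^{\gamma_t})]$ and driver of the form $A_h(s)\Delta_h^iY_{\gamma_t}+B_h(s)\Delta_h^iZ_{\gamma_t}+\xi_h(s)$, where $A_h,B_h$ are obtained from $f_y,f_z$ via the integral mean-value identity $g(y_1)-g(y_0)=\int_0^1 g'(y_0+\theta(y_1-y_0))d\theta\cdot(y_1-y_0)$ and $\xi_h(s)=\frac{1}{h}[f(B^{\gamma_t^{he_i}}_s,Y_{\gamma_t},Z_{\gamma_t})-f(B^{\gamma_t}_s,Y_{\gamma_t},Z_{\gamma_t})]$. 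Under (H2) the coefficients $A_h,B_h$ are uniformly bounded and $\xi_h$ has moments of all orders polynomial in $\|\gamma_t\|$, uniformly in $h$. Writing the analogous BSDE with $(\bar\gamma_{\bar t},\bar h)$ and subtracting, the difference $\Delta_h^iY_{\gamma_t}-\Delta_{\bar h}^iY_{\bar\gamma_{\bar t}}$ again solves a linear BSDE to which Corollary \ref{corol1} applies; the task reduces to controlling in $L^p$ the resulting terminal and source terms.

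The main obstacle is precisely this last control: one must show that the maps $\gamma_t\mapsto \xi_h(s)$ and $\gamma_t\mapsto \frac{1}{h}[\Phi(B^{\gamma_t^{he_i}})-\Phi(B^{\gamma_t})]$ are Lipschitz in $(\gamma_t,h)$ with the rate $|h-\bar h|+d_\infty(\gamma_t,\bar\gamma_{\bar t})+|t-\bar t|^{1/2}$, uniformly as $h,\bar h\to 0$. Here the full force of the $C^2_{l,\mathrm{lip}}$ hypothesis on $\Phi$ and on $\gamma_t\mapsto f(\gamma_t,y,z)$ is used: the second directional derivative provides a Taylor remainder $o(h)$ that is Lipschitz in $\gamma_t$, and this translates (via another mean-value identity in the $h$ variable) into joint Lipschitz dependence of the difference quotient on $(\gamma_t,h)$. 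The horizontal component $|t-\bar t|^{1/2}$ again originates from the Brownian increment on $[t\wedge\bar t,t\vee\bar t]$, handled exactly as in paragraph one. Combining these Lipschitz bounds with the $L^p$-estimates of Lemma \ref{w4} and a final invocation of Corollary \ref{corol1} yields (iii) and (iv).
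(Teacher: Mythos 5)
Your proposal follows essentially the same route as the paper: write the differences (and the difference quotients) as linearized BSDEs via the integral mean-value identity in $(y,z)$, bound the terminal and source terms using the polynomial-Lipschitz hypotheses on $\Phi$ and $f$ together with the decomposition of $B^{\gamma_t}-B^{\bar\gamma_{\bar t}}$ (which yields the $|d_\infty|^p+|t-\bar t|^{p/2}$ rate), and then invoke the a priori estimate of Corollary \ref{corol1}; for (iii)--(iv) the paper likewise reduces the terminal and source control to the Lipschitz continuity of the directional derivatives $\Phi'_{\gamma_t}$ and $f'_{\gamma_t}$ guaranteed by $C^2_{l,lip}$, exactly as you indicate. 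The argument is correct and matches the paper's proof in all essential respects.
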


\begin{proof}
$(Y_{\gamma_{t}}-Y_{\bar{\gamma}_{\bar{t}}},Z_{\gamma_{t}}-Z_{\bar{\gamma
}_{\bar{t}}})$ can be formed as a linearized BSDE:
\begin{align*}
&  Y_{{\gamma_{t}}}(r)-Y_{\bar{\gamma}_{\bar{t}}}(r)\\
= &  \Phi(B^{\gamma_{t}})-\Phi(B^{\bar{\gamma}_{\bar{t}}})+\int_{r}%
^{T}[f(B_{u}^{\gamma_{t}},Y_{{\gamma_{t}}}(u),Z_{{\gamma_{t}}}(u))-f(B_{u}%
^{\bar{\gamma}_{\bar{t}}},Y_{{\bar{\gamma}_{\bar{t}}}}(u),Z_{{\bar{\gamma
}_{\bar{t}}}}(u))]du\\
&  -\int_{u}^{T}(Z_{{\gamma_{t}}}(u)-Z_{{\bar{\gamma}_{\bar{t}}}}(u))dB(u),\\
= &  \Phi(B^{\gamma_{t}})-\Phi(B^{\bar{\gamma}_{\bar{t}}})+\int_{r}^{T}%
[\alpha_{\gamma_{t},\bar{\gamma}_{\bar{t}}}(u)(Y_{{\gamma_{t}}}(u)-Y_{{\bar
{\gamma}_{\bar{t}}}}(u))+\beta_{\gamma_{t},\bar{\gamma}_{\bar{t}}%
}(u)(Z_{{\gamma_{t}}}(u)-Z_{{\bar{\gamma}_{\bar{t}}}}(u))+\hat{f}_{\gamma
_{t},\bar{\gamma}_{\bar{t}}}(u)]du\\
&  -\int_{r}^{T}(Z_{{\gamma_{t}}}(u)-Z_{{\bar{\gamma}_{\bar{t}}}%
}(u))dB(u),\ r\in\lbrack t\vee\bar{t},T],
\end{align*}
where
\begin{align*}
\alpha_{\gamma_{t},\bar{\gamma}_{\bar{t}}}(u)= &  \int_{0}^{1}\frac{\partial
f}{\partial y}(B_{u}^{\gamma_{t}},Y_{{\bar{\gamma}_{\bar{t}}}}(u)+\theta
(Y_{{\gamma_{t}}}(u)-Y_{{\bar{\gamma}_{\bar{t}}}}(u)),Z_{{\bar{\gamma}%
_{\bar{t}}}}(u)+\theta(Z_{{\gamma_{t}}}(u)-Z_{{\bar{\gamma}_{\bar{t}}}%
}(u)))d\theta,\\
\beta_{\gamma_{t},\bar{\gamma}_{\bar{t}}}(u)= &  \int_{0}^{1}\frac{\partial
f}{\partial z}(B_{u}^{\gamma_{t}},Y_{{\bar{\gamma}_{\bar{t}}}}(u)+\theta
(Y_{{\gamma_{t}}}(u)-Y_{{\bar{\gamma}_{\bar{t}}}}(u)),Z_{{\bar{\gamma}%
_{\bar{t}}}}(u)+\theta(Z_{{\gamma_{t}}}(u)-Z_{{\bar{\gamma}_{\bar{t}}}%
}(u)))d\theta,\\
\hat{f}_{\gamma_{t},\bar{\gamma}_{\bar{t}}}(u)= &  f(B_{u}^{\gamma_{t}%
},Y_{{\bar{\gamma}_{\bar{t}}}}(u),Z_{{\bar{\gamma}_{\bar{t}}}}(u))-f(B_{u}%
^{\bar{\gamma}_{\bar{t}}},Y_{{\bar{\gamma}_{\bar{t}}}}(u),Z_{{\bar{\gamma
}_{\bar{t}}}}(u)).
\end{align*}
By assumptions $(\mathbf{H1})$ and $(\mathbf{H2})$,
\[
|\hat{f}_{\gamma_{t},\bar{\gamma}_{\bar{t}}}(u)|+|\Phi(B^{\gamma_{t}}%
)-\Phi(B^{\bar{\gamma}_{\bar{t}}})|\leq2C(1+\left\Vert B_{T}^{\gamma_{t}%
}\right\Vert ^{q}+\left\Vert B_{T}^{\bar{\gamma}_{\bar{t}}}\right\Vert
^{q})\sup\limits_{u}|B^{\gamma_{t}}(u)-B^{\bar{\gamma}_{\bar{t}}}(u)|,
\]
we know that the first two inequalities hold true after applying
Corollary \ref{corol1} to the above linearized BSDE.

For the last two inequalities, we also can write $(\Delta_{h}^{i}%
Y_{{\gamma_{t}}},\Delta_{h}^{i}Z_{{\gamma_{t}}})$ as the solution of the
following linearized BSDE:
\begin{align*}
\Delta_{h}^{i}Y_{{\gamma_{t}}}(r)= &  \frac{1}{h}(\Phi(B^{\gamma_{t}^{he_{i}}%
})-\Phi(B^{\gamma_{t}}))+\int_{r}^{T}(\alpha_{\gamma_{t}^{he_{i}},\gamma_{t}%
}(u)\Delta_{h}^{i}Y_{{\gamma_{t}}}(u)\\
&  +\beta_{\gamma_{t}^{he_{i}},\gamma_{t}}(u)\Delta_{h}^{i}Z_{{\gamma_{t}}%
}(u)+\frac{1}{h}\hat{f}_{\gamma_{t}^{he_{i}},\gamma_{t}}(u))du-\int_{r}%
^{T}\Delta_{h}^{i}Z_{\gamma_{t}}(u)dB_{u}.
\end{align*}
Then the same calculus as above implies that:
\[
E[\sup\limits_{s\in\lbrack0,T]}|\Delta_{h}^{i}Y_{{\gamma_{t}}}(s)|^{p}%
+|\int_{0}^{T}\Vert\Delta_{h}^{i}Z_{{\gamma_{t}}}(s)\Vert^{2}ds|^{\frac{p}{2}%
}]\leq C_{p}(1+\Vert\gamma_{t}\Vert^{q}+|h|^{q}).
\]
Finally, we consider
\begin{align*}
\Delta_{h}^{i}Y_{{\gamma_{t}}}(r)-\Delta_{\bar{h}}^{i}Y_{\bar{\gamma}_{\bar
{t}}}(r)= &  \frac{1}{h}(\Phi(B^{\gamma_{t}^{he_{i}}})-\Phi(B^{\gamma_{t}%
}))-\frac{1}{\bar{h}}(\Phi(B^{{\bar{\gamma}}_{\bar{t}}^{\bar{h}e_{i}}}%
)-\Phi(B^{\bar{\gamma}_{\bar{t}}}))\\
&  +\int_{r}^{T}(\alpha_{\gamma_{t}^{he_{i}},\gamma_{t}}(u)\Delta_{h}%
^{i}Y_{{\gamma_{t}}}(u)-\alpha_{\bar{\gamma}_{\bar{t}}^{\bar{h}e_{i}},\bar{\gamma
}_{\bar{t}}}(u)\Delta_{\bar{h}}^{i}Y_{{\bar{\gamma}_{\bar{t}}}}(u))du\\
&  +\int_{r}^{T}(\beta_{\gamma_{t}^{he_{i}},\gamma_{t}}(u)\Delta_{h}%
^{i}Z_{{\gamma_{t}}}(u)-\beta_{\bar{\gamma}_{\bar{t}}^{\bar{h}e_{i}},\bar{\gamma
}_{\bar{t}}}(u)\Delta_{\bar{h}}^{i}Z_{{\bar{\gamma}_{\bar{t}}}}(u))du\\
&  +\int_{r}^{T}(\frac{1}{h}\hat{f}_{\gamma_{t}^{he_{i}},\gamma_{t}}%
(u)-\frac{1}{\bar{h}}\hat{f}_{\bar{\gamma}_{{\bar{t}}}^{\bar{h}e_{i}},\bar{\gamma
}_{t}}(u))du)\\
&  -\int_{r}^{T}(\Delta_{h}^{i}Z_{{\gamma_{t}}}(u)-\Delta_{\bar{h}}%
^{i}Z_{{\bar{\gamma}_{\bar{t}}}}(u))dB(u), \ r\in\lbrack
t\vee\bar{t},T]
\end{align*}
Thus $(\tilde{Y}(r),\tilde{Z}(r)):=(\Delta_{h}^{i}Y_{{\gamma_{t}}}%
(r)-\Delta_{\bar{h}}^{i}Y_{\bar{\gamma}_{\bar{t}}}(r),\Delta_{h}^{i}%
Z_{{\gamma_{t}}}(r)-\Delta_{\bar{h}}^{i}Z_{\bar{\gamma}_{\bar{t}}}(r))$ solves
the BSDE%
\begin{align*}
\tilde{Y}(r)  & =\frac{1}{h}(\Phi(B^{\gamma_{t}^{he_{i}}})-\Phi(B^{\gamma_{t}%
}))-\frac{1}{\bar{h}}(\Phi(B^{{\bar{\gamma}}_{\bar{t}}^{\bar{h}e_{i}}}%
)-\Phi(B^{\bar{\gamma}_{\bar{t}}}))\\
& +\int_{r}^{T}[\alpha_{\gamma_{t}^{he_{i}},\gamma_{t}}(u)\tilde{Y}%
(r)+\beta_{\gamma_{t}^{he_{i}},\gamma_{t}}(u)\tilde{Z}(u)+\tilde{f}%
(u)]du-\int_{r}^{T}\tilde{Z}(u)dB(u),
\end{align*}
with%
\begin{align*}
\tilde{f}(u)  & :=[\alpha_{\gamma_{t}^{he_{i}},\gamma_{t}}(u)-\alpha
_{\bar{\gamma}_{\bar{t}}^{\bar{h}e_{i}},\bar{\gamma}_{\bar{t}}}(u)]\Delta_{\bar{h}%
}^{i}Y_{{\bar{\gamma}_{\bar{t}}}}(u)+[\beta_{\gamma_{t}^{he_{i}},\gamma_{t}%
}(u)-\beta_{\bar{\gamma}_{\bar{t}}^{\bar{h}e_{i}},\bar{\gamma}_{\bar{t}}}%
(u)]\Delta_{\bar{h}}^{i}Z_{{\bar{\gamma}_{\bar{t}}}}(u)\\
& +\frac{1}{h}\hat{f}_{\gamma_{t}^{he_{i}},\gamma_{t}}(u)-\frac{1}{\bar{h}%
}\hat{f}_{\bar{\gamma}_{{\bar{t}}}^{\bar{h}e_{i}},\bar{\gamma}_{t}}(u)
\end{align*}
By assumptions $(\mathbf{H1})$ and $(\mathbf{H2})$, there exist some
$\delta,\varepsilon_{u}\in\lbrack0,1],$
\begin{align*}
&  \Phi(B^{\gamma_{t}^{he_{i}}})-\Phi(B^{\gamma_{t}})=\langle\Phi_{\gamma_{t}%
}^{\prime}(B^{\gamma_{t}^{\delta he_{i}}}),e_{i}\rangle h,\\
&  \hat{f}_{\gamma_{t}^{he_{i}},\gamma_{t}}(u)=\langle f_{\gamma_{t}}^{\prime}%
(B_{u}^{\gamma_{t}^{\varepsilon_{u}he_{i}}},Y_{\gamma_{t}}(u),Z_{\gamma_{t}%
}(u)),e_{i}\rangle h,
\end{align*}
again by the  Corollary \ref{corol1}, we know that the last two
inequalities holds true.
\end{proof}

We now establish that

\begin{proposition}
\label{w9} For each $\gamma_{t}\in\Lambda$, $\{Y_{^{\gamma^{x}_{t}}}(s),
s\in\lbrack0,T],x\in\mathbb{R}^{m}\}$ has a version which is a.e. of class
$C^{0,2}([0,T]\times\mathbb{R}^{m})$.
\end{proposition}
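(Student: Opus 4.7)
The plan is to combine Kolmogorov's continuity criterion with the moment estimates of Proposition \ref{w5}. Fix $\gamma_t \in \Lambda$; for every $x \in \mathbb{R}^d$ the perturbed path $\gamma_t^x = (\gamma(s)_{0 \leq s < t}, \gamma(t) + x)$ still lies in $\Lambda_t$ and satisfies $\|\gamma_t^x\| \leq \|\gamma_t\| + |x|$, so all bounds involving $\gamma_t^x$ and $\gamma_t^{\bar x}$ follow by feeding $\bar{\gamma}_{\bar t} = \gamma_t^{\bar x}$ (with the same initial time $t$) into the estimates of Proposition \ref{w5}.

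First, I would establish joint continuity of $(s,x) \mapsto Y_{\gamma_t^x}(s)$. Part (i) of Proposition \ref{w5} gives, for any $p \geq 2$,
\[
E\Bigl[\sup_{s \in [0,T]} \bigl|Y_{\gamma_t^x}(s) - Y_{\gamma_t^{\bar x}}(s)\bigr|^p\Bigr] \leq C_p\bigl(1 + \|\gamma_t\|^q + |x|^q + |\bar x|^q\bigr)\,|x - \bar x|^p,
\]
so choosing $p$ large and applying the Kolmogorov--\v{C}entsov criterion in $x$ yields a version that is locally H\"older in $x$; continuity in $s$ is automatic from $Y_{\gamma_t^x} \in S^2$. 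Next, for first-order differentiability, fix a direction $i$ and view the difference quotient $\Delta_h^i Y_{\gamma_t^x}(s)$ as a process indexed by $(s,x,h)$. Part (iii) of Proposition \ref{w5}, applied with $\bar{\gamma}_{\bar t} = \gamma_t^{\bar x}$, produces
\[
E\Bigl[\sup_{s}\bigl|\Delta_h^i Y_{\gamma_t^x}(s) - \Delta_{\bar h}^i Y_{\gamma_t^{\bar x}}(s)\bigr|^p\Bigr] \leq C_p\bigl(1 + \|\gamma_t\|^q + |x|^q + |\bar x|^q + |h|^q + |\bar h|^q\bigr)\bigl(|h - \bar h|^p + |x - \bar x|^p\bigr).
\]
Kolmogorov once more gives a jointly continuous version on $[0,T] \times \mathbb{R}^d \times (\mathbb{R}\setminus\{0\})$ which extends continuously to $h = 0$, and the extended value is the partial derivative $\partial_{x_i} Y_{\gamma_t^x}(s)$.

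The main technical step is the second derivative, which I would obtain by \emph{iterating} the previous construction. Formally differentiating (\ref{p1}) in $x_i$ shows that $U_i^x(s) := \partial_{x_i} Y_{\gamma_t^x}(s)$ (paired with the corresponding derivative of $Z$) solves a linearized BSDE whose terminal is $\langle\Phi'_{\gamma_t}(B^{\gamma_t^x}), e_i\rangle$ and whose driver involves $f_y, f_z$ and the path derivative $\partial_{x_i} f$ evaluated along $(B^{\gamma_t^x}, Y_{\gamma_t^x}, Z_{\gamma_t^x})$. Assumptions (\textbf{H1}) and (\textbf{H2}) are calibrated precisely so that this new BSDE again falls within the framework of Proposition \ref{w5}: one order of smoothness is consumed, but $\Phi$ remains $C^1_{l,lip}$, $f$ remains $C^2_p$ in $(x,y,z)$ with $C^1_{l,lip}$ path dependence of $f_y, f_z$, etc. A second round of the difference-quotient/Kolmogorov argument, now applied to $U_i^x$, then yields the existence and joint continuity of $\partial_{x_j} U_i^x(s) = \partial^2_{x_j x_i} Y_{\gamma_t^x}(s)$ for every $j$.

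The hard part is precisely this iteration: one must verify that the linearized BSDE for the first derivative inherits enough regularity in the path parameter for Proposition \ref{w5} to be applied a second time, which is why (\textbf{H2}) imposes $C^3_p$ in $(x,y,z)$ together with the chain of $C^2_{l,lip}$/$C^1_{l,lip}$/$C_{l,lip}$ conditions on the path-derivatives of $f$. Once these continuous versions are in hand and glued together on the complement of a single $P$-null set, the map $(s,x) \mapsto Y_{\gamma_t^x}(s)$ is $C^{0,2}([0,T] \times \mathbb{R}^m)$ almost surely, proving the proposition.
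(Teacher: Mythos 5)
Your argument is essentially the paper's own proof: both use the moment estimates of Proposition \ref{w5} (with $\bar{\gamma}_{\bar t}=\gamma_t^{\bar x}$) together with Kolmogorov's criterion to produce the continuous first derivative, then identify $(D_xY_{\gamma_t},D_xZ_{\gamma_t})$ as the solution of the linearized BSDE and apply Proposition \ref{w5} and Kolmogorov a second time to obtain the continuous second derivative. The approach and the key steps coincide, so nothing further is needed.
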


\begin{proof}
To simplify presentation, we shall prove only the case when $m=d=1$, as the
higher dimensional case can be treated in the same way without substantial
difficulty. Thus, for each $h,\bar{h}\in\mathbb{R}\backslash\{0\}$ and
$k,\bar{k}\in\mathbb{R}$,
\begin{align*}
&  E[\sup\limits_{s\in\lbrack0,T]}|Y_{{_{\gamma_{t}^{k}}}}(s)-Y_{{\gamma
_{t}^{\bar{k}}}}(s)|^{p}]\leq C_{p}(1+\Vert\gamma_{t}\Vert^{q})(|k-\bar
{k}|^{p}),\\
&  E[|\int_{0}^{T}|Z_{{\gamma_{t}^{k}}}(s)-Z_{{\gamma_{t}^{\bar{k}}}}%
|^{2}ds|^{\frac{p}{2}}]\leq C_{p}(1+\Vert\gamma_{t}\Vert^{q})(|k-\bar{k}%
|^{p}),\\
&  E[\sup\limits_{s\in\lbrack0,T]}|\Delta_{h}Y_{{\gamma_{t}^{k}}}%
(s)-\Delta_{\bar{h}}Y_{{\gamma_{t}^{\bar{k}}}}(s)|^{p}]\\
\leq &  C_{p}(1+\Vert\gamma_{t}\Vert^{q}+|h|^{p}+|\bar{h}|^{p})(|k-\bar
{k}|^{p}+|h-\bar{h}|^{p}),\\
&  E[|\int_{0}^{T}|\Delta_{h}Z_{{\gamma_{t}^{k}}}(s)-\Delta_{\bar{h}%
}Z_{{\gamma_{t}^{\bar{k}}}}(s)|^{2}ds|^{\frac{p}{2}}]\\
\leq &  C_{p}(1+\Vert\gamma_{t}\Vert^{q}+|h|^{p}+|\bar{h}|^{p})(|k-\bar
{k}|^{p}+|h-\bar{h}|^{p}).
\end{align*}
Therefore, using the Kolmogorov's criterion, the existence of a
continuous derivative of $Y_{{\gamma_{t}^{x}}}(s)$ with respect to
$x$ follows easily from the above estimate, as well as the existence
of a mean-square derivative of $Z_{{\gamma_{t}^{x}}}(s)$ with
respect to $x$, which is mean square
continuous in $x$. We denote them by $(D_{x}Y_{{\gamma_{t}}},D_{x}%
Z_{{\gamma_{t}}})$.

We now prove the existence of the continuous second derivative of
$Y_{{\gamma_{t}^{x}}}(s)$ with respect to $x$. By Proposition
\ref{w5}, $(D_{x}Y_{{\gamma_{t}}},D_{x}Z_{{\gamma_{t}}})$ is the
solution of the  following linearized BSDE:
\begin{align*}
D_{x}Y_{{\gamma_{t}}}(s)= &  \Phi_{\gamma_{t}}^{\prime}(B^{\gamma_{t}}%
)+\int_{s}^{T}[f_{y}(B_{r}^{\gamma_{t}},Y_{{\gamma_{t}}}(r),Z_{{\gamma_{t}}%
}(r))D_{x}Y_{{\gamma_{t}}}(r)+f_{z}(B_{r}^{\gamma_{t}},Y_{{\gamma_{t}}%
}(r),Z_{{\gamma_{t}}}(r))D_{x}Z_{{\gamma_{t}}}(r)]dr\\
&  +\ \int_{s}^{T}f_{\gamma_{t}}^{\prime}(B_{r}^{\gamma_{t}},Y_{{\gamma_{t}}%
}(r),Z_{{\gamma_{t}}}(r))dr-\ \int_{s}^{T}D_{x}Z_{{\gamma_{t}}}(r)dB(r).
\end{align*}
Then, applying Proposition \ref{w5}, we have: for each $h,\bar{h}%
\in\mathbb{R}\backslash\{0\}$ and $k,\bar{k}\in\mathbb{R}$ we have
\begin{align*}
&  E[\sup\limits_{s\in\lbrack0,T]}|\Delta_{h}D_{x}Y_{{\gamma_{t}^{k}}%
}(s)-\Delta_{\bar{h}}D_{x}Y_{{\gamma_{t}^{\bar{k}}}}(s)|^{p}]\leq
C_{p}(1+\Vert\gamma_{t}\Vert^{q})(|k-\bar{k}|^{p}+|h-\bar{h}|^{p}),\\
&  E[|\int_{0}^{T}|\Delta_{h}D_{x}Z_{{\gamma_{t}^{k}}}(s)-\Delta_{\bar{h}%
}D_{x}Z_{{\gamma_{t}^{\bar{k}}}}|^{2}ds|^{\frac{p}{2}}]\leq C_{p}%
(1+\Vert\gamma_{t}\Vert^{q})(|k-\bar{k}|^{p}+|h-\bar{h}|^{p}),
\end{align*}
which completes the proof.
\end{proof}

Now we define:
\begin{equation}
u(\gamma_{t}):=Y_{{\gamma_{t}}}(t),\ \ for\ \ \gamma_{t}\in\Lambda.\label{eq4}%
\end{equation}
By the definition of vertical derivative and Proposition $\ref{w5}$,
we have the following corollary

\begin{corollary}
\label{w1} $u(\gamma_{t})$ is $\Lambda$-continuous and
$D_{x}u(\gamma_{t})$, $D_{xx}u({\gamma_{t}})$ exist, moreover they
are both $\Lambda$-continuous. Furthermore,
$u\in\mathbb{C}_{l,lip}^{0,2}(\Lambda).$
\end{corollary}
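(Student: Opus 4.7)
The plan is to deduce everything from the estimates in Proposition \ref{w5} together with Proposition \ref{w9}, exploiting the fact that $u(\gamma_t)=Y_{\gamma_t}(t)$ is a \emph{deterministic} function of $\gamma_t$, so any $L^p$ bound on the stochastic process immediately becomes a pointwise bound on $u$.

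First, for $\Lambda$-continuity of $u$, I would observe that, with the extension convention $Y_{\gamma_t}(s)=Y_{\gamma_t}(s\vee t)$ and similarly for $\bar\gamma_{\bar t}$, the quantity $Y_{\gamma_t}(t)-Y_{\bar\gamma_{\bar t}}(\bar t)$ is constant (deterministic), so
$$|u(\gamma_t)-u(\bar\gamma_{\bar t})|^p\le E\bigl[\sup_{u\in[0,T]}|Y_{\gamma_t}(u)-Y_{\bar\gamma_{\bar t}}(u)|^p\bigr].$$
Then Proposition \ref{w5}(i) with $p=2$ yields $|u(\gamma_t)-u(\bar\gamma_{\bar t})|^2\le C_2(1+\|\gamma_t\|^q+\|\bar\gamma_{\bar t}\|^q)(d_\infty^2+|t-\bar t|)$, which tends to $0$ as $d_\infty(\gamma_t,\bar\gamma_{\bar t})\to0$ (since $d_\infty$ controls $|t-\bar t|$). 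This proves $\Lambda$-continuity.

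Next, existence of $D_xu(\gamma_t)$ and $D_{xx}u(\gamma_t)$ is a direct consequence of Proposition \ref{w9}: that proposition says that $x\mapsto Y_{\gamma_t^x}(t)=u(\gamma_t^x)$ admits a $C^2$ version in $x$, so the ordinary gradient and Hessian at $x=0$ exist, and these are, by definition, the vertical derivatives $D_xu(\gamma_t)$ and $D_{xx}u(\gamma_t)$.

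For $\Lambda$-continuity of $D_xu$, the key identity is $D_{x_i}u(\gamma_t)=\lim_{h\to0}\Delta_h^i Y_{\gamma_t}(t)$, where the difference quotient on the right is again deterministic. Proposition \ref{w5}(iii) with $\bar h=h$ and $p=2$ gives
$$|\Delta_h^iY_{\gamma_t}(t)-\Delta_h^iY_{\bar\gamma_{\bar t}}(\bar t)|^2\le C_2\bigl(1+\|\gamma_t\|^q+\|\bar\gamma_{\bar t}\|^q+|h|^q\bigr)\bigl(d_\infty^2+|t-\bar t|\bigr),$$
and passing to the limit $h\to0$ (the right-hand side is bounded uniformly in $|h|\le1$) yields the same estimate for $D_{x_i}u$, proving its $\Lambda$-continuity. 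For $D_{xx}u$ the argument is analogous: the proof of Proposition \ref{w9} shows that $(D_xY_{\gamma_t},D_xZ_{\gamma_t})$ solves a linearized BSDE whose coefficients inherit the regularity assumed in $(\mathbf{H1})$--$(\mathbf{H2})$; running the same Proposition \ref{w5}-type analysis for this BSDE and taking a second difference quotient at $x=0$ gives a polynomial-weighted modulus of continuity for $D_{xx}u$.

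Finally, for $u\in\mathbb{C}^{0,2}_{l,lip}(\Lambda)$, I specialize all of the above estimates to the case $\bar t=t$, where the awkward $|t-\bar t|^{p/2}$ term disappears. Proposition \ref{w5}(i) with this restriction gives
$$|u(\gamma_t)-u(\bar\gamma_t)|^p\le C_p(1+\|\gamma_t\|^q+\|\bar\gamma_t\|^q)\,d_\infty(\gamma_t,\bar\gamma_t)^p,$$
and using $(a+b+c)^{1/p}\le a^{1/p}+b^{1/p}+c^{1/p}$ one obtains the linear $d_\infty$-Lipschitz bound $|u(\gamma_t)-u(\bar\gamma_t)|\le C(1+\|\gamma_t\|^{q'}+\|\bar\gamma_t\|^{q'})d_\infty(\gamma_t,\bar\gamma_t)$ required by the definition of $\mathbb{C}^{0,2}_{l,lip}$. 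The same specialization of the moduli obtained in the previous paragraph handles $D_xu$ and $D_{xx}u$. The only non-routine point in the whole argument is the $\Lambda$-continuity of $D_{xx}u$, which requires a second application of the Proposition \ref{w5}-style linearized BSDE estimate to the equation for $D_xY_{\gamma_t}$; once the coefficients of that equation are checked to satisfy the hypotheses of Corollary \ref{corol1}, everything else is bookkeeping.
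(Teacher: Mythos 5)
Your proof is correct and follows essentially the same route as the paper, which states this corollary without a separate proof as an immediate consequence of Proposition \ref{w5} (the $L^p$-estimates, which become pointwise bounds since $u(\gamma_t)=Y_{\gamma_t}(t)$ is deterministic) and Proposition \ref{w9} (the $C^{0,2}$ regularity in $x$). Your reconstruction, including the observation that the $l,lip$ bound is read off by specializing to $\bar t=t$ and that the $\Lambda$-continuity of $D_{xx}u$ requires rerunning the Proposition \ref{w5}-type estimates on the linearized BSDE for $(D_xY_{\gamma_t},D_xZ_{\gamma_t})$, is a faithful (indeed more detailed) account of what the paper leaves implicit.
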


\subsection{Path regularity of process Z.}

In Pardoux and Peng \cite{PP1}, when the terminal of BSDE is the
state-dependent case when $f=\bar{f}(t,\gamma(t),y,z)$ and $\Phi
=\varphi(\gamma(T))$, it is shown that $Z$ and $Y$ are connected in the
following sense under appropriate assumptions:%
\[
Z_{\gamma_{t}}(s)=\partial_{x}u(s,\gamma_t(t)+B(s)-B(t)).
\]
In this section, we extend this result to the path-dependent case. Indeed, we
have below a formula relating $Z$ with $Y$.\newline

\begin{proposition}
\label{w6} Under assumptions $(\mathbf{H}1)$-$(\mathbf{H}2)$, for each fixed
$\gamma_{t}\in\Lambda$, the process $(Z_{\gamma_{t}}(s))_{s\in\lbrack t,T]}$
has a continuous version with the form,
\[
D_{x}u(B_{s}^{\gamma_{t}})=Z_{{\gamma_{t}}}(s),\ \text{for each }s\in\lbrack
t,T],\ \ a.s..
\]

\end{proposition}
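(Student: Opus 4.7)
The plan is to establish the identification $Z_{\gamma_t}(s) = D_x u(B_s^{\gamma_t})$ in two main steps: first prove a flow (Markov-like) property for the BSDE, and then identify the $dB$-integrand by comparing stochastic representations.

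\textbf{Step 1 (flow property).} I would first show that for every $s \in [t,T]$,
\begin{equation*}
Y_{\gamma_t}(s) = u(B_s^{\gamma_t})\quad \text{a.s.}
\end{equation*}
For a deterministic path $\bar\gamma_s \in \Lambda_s$, by definition $Y_{\bar\gamma_s}(s) = u(\bar\gamma_s)$, and the pair $(Y_{\bar\gamma_s}, Z_{\bar\gamma_s})$ solves the BSDE on $[s,T]$ driven by the shifted Brownian motion $B^{\bar\gamma_s}$. Specializing to $\bar\gamma_s = B_s^{\gamma_t}$ (a random path) and observing that $B^{B_s^{\gamma_t}}$ coincides with $B^{\gamma_t}$ on $[s,T]$, the pair $(Y_{\gamma_t}(\cdot), Z_{\gamma_t}(\cdot))_{[s,T]}$ also solves that BSDE with the same terminal. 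Uniqueness for BSDEs (Lemma 2.4) together with the $\Lambda$-continuity of $u$ (Corollary 3.5), which justifies evaluating $u$ along random paths, gives the flow identity. A regular-conditional-probability argument handles the measurability subtlety introduced by freezing the random initial path.

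\textbf{Step 2 (identification of $Z$).} Given the flow property, for $s \le s' \le T$,
\begin{equation*}
u(B_{s'}^{\gamma_t}) - u(B_s^{\gamma_t}) = Y_{\gamma_t}(s') - Y_{\gamma_t}(s) = -\int_s^{s'} f(B_r^{\gamma_t}, Y_{\gamma_t}(r), Z_{\gamma_t}(r))\,dr + \int_s^{s'} Z_{\gamma_t}(r)\,dB(r),
\end{equation*}
so the martingale part of $s \mapsto u(B_s^{\gamma_t})$ is $\int Z_{\gamma_t}\,dB$. To identify $Z_{\gamma_t}(s)$ with $D_x u(B_s^{\gamma_t})$, I would use vertical differentiability of $u$: for small $h>0$, approximate $B^{\gamma_t}$ on $[s,s+h]$ by the piecewise-constant path that jumps by $B(s+h)-B(s)$ at time $s$, which in $d_\infty$ is close to $(B^{\gamma_t})_s^{B(s+h)-B(s)}$ with error $O(\sup_{r\in[s,s+h]}|B(r)-B(s)|)$. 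Using $u \in \mathbb{C}^{0,2}_{l,lip}$ and the vertical Taylor expansion,
\begin{equation*}
u(B_{s+h}^{\gamma_t}) - u(B_s^{\gamma_t}) = \langle D_x u(B_s^{\gamma_t}),\, B(s+h)-B(s)\rangle + R_h,
\end{equation*}
where $R_h$ is controlled by the estimates of Proposition 3.4/3.7. Dividing by $h$, taking $L^2$-limits, and invoking uniqueness of the Itô martingale representation yields $Z_{\gamma_t}(s) = D_x u(B_s^{\gamma_t})$ for a.e.\ $s$. The $\Lambda$-continuity of $D_x u$ (Corollary 3.5) then provides the continuous version on $[t,T]$.

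\textbf{Main obstacle.} The natural approach would be to apply Dupire's functional Itô formula (Theorem 2.1) to $u(B_\cdot^{\gamma_t})$ and read off $Z_{\gamma_t} = D_x u(B_\cdot^{\gamma_t})$ from the martingale part. But Corollary 3.5 gives only $u \in \mathbb{C}^{0,2}_{l,lip}$, not $\mathbb{C}^{1,2}$: the horizontal derivative $D_s u$ has not yet been constructed (this is exactly what the later path-PDE theorems achieve). The hard part of the proof is therefore to extract the $dB$-integrand of $u(B_\cdot^{\gamma_t})$ \emph{without} having the horizontal derivative, by using the BSDE itself to absorb the unknown drift and exploiting only vertical differentiability. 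The delicate estimates from Proposition 3.7 on the path-regularity of $Y_{\gamma_t}$ and $Z_{\gamma_t}$ are precisely what is needed to make the remainder term $R_h$ vanish after scaling.
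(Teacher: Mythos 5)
Your Step 1 (the flow identity $Y_{\gamma_t}(s)=u(B_s^{\gamma_t})$) is fine and is also what the paper uses implicitly. The gap is in Step 2, precisely at the point you yourself flag as ``the hard part''. Write $\tilde\gamma_{s+h}$ for the path that follows $B^{\gamma_t}$ up to $s$, stays flat on $[s,s+h)$, and jumps to $B^{\gamma_t}(s+h)$ at time $s+h$; the honest decomposition is
\[
u(B^{\gamma_t}_{s+h})-u(B^{\gamma_t}_{s})
= \bigl[u(B^{\gamma_t}_{s+h})-u(\tilde\gamma_{s+h})\bigr]
+ \bigl[u(\tilde\gamma_{s+h})-u((B^{\gamma_t}_{s})_{s,s+h})\bigr]
+ \bigl[u((B^{\gamma_t}_{s})_{s,s+h})-u(B^{\gamma_t}_{s})\bigr]
=: A_h + V_h + H_h .
\]
The horizontal piece $H_h$ is only $O(\sqrt h)$ by Proposition \ref{w5} (no horizontal derivative is available at this stage), so $R_h/h$ does \emph{not} converge in $L^2$ and the literal ``divide by $h$ and take $L^2$-limits'' breaks down; since $H_h$ is $\mathcal F_s$-measurable you can instead project onto $\Delta B:=B(s+h)-B(s)$ and kill it, and the vertical piece $V_h$ then correctly produces $D_xu(B_s^{\gamma_t})\,h+o(h)$ using $u\in\mathbb C^{0,2}_{l,lip}$. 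But for $A_h$ --- the sensitivity of $u$ to replacing the Brownian path on $(s,s+h)$ by its flat version while keeping the same endpoint --- the only estimate your cited ingredients provide is the Lipschitz bound of Proposition \ref{w5}, namely $E[A_h^2\mid\mathcal F_s]^{1/2}\le C\,E[\sup_{r\in[s,s+h)}|B(r)-B(s)|^2\cdot(\cdots)]^{1/2}\sim\sqrt h$, whence $|E[A_h\,\Delta B\mid\mathcal F_s]|\le Ch$. That is the \emph{same order} as the main term $D_xu\cdot h$, not $o(h)$, so the identification does not follow. Showing that this cross-correlation is actually $o(h)$ requires exploiting the second-order path structure of $\Phi$ and $f$ (morally, that the dependence of $u$ on the bulk of the path enters through quantities that gain an extra factor of $h$ when averaged against $\Delta B$); nothing in your argument supplies this, and it is essentially the whole content of the proposition.

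The paper avoids the issue entirely by a cylindrical approximation. Lemma \ref{w7} first proves the identity when $\Phi$ and $f$ depend on the path only through finitely many increments: there the BSDE reduces piecewise to classical Markovian BSDEs, $u$ is assembled from classical $C^{1,2}$ solutions $v_1,v_2$ of quasilinear PDEs, and $Z_{\gamma_t}(s)=\partial_x v_i(\cdots)=D_xu(B_s^{\gamma_t})$ is exactly the classical Pardoux--Peng identification. The proof of Proposition \ref{w6} then discretizes general $(\Phi,f)$ along a grid, applies Lemma \ref{w7} iteratively to get $D_xu^{(n)}(B_s^{\gamma_t})=Z^{(n)}_{\gamma_t}(s)$, and passes to the limit using the stability estimates of Corollary \ref{corol1} to obtain $D_xu^{(n)}\to D_xu$ and $Z^{(n)}\to Z$, finally upgrading the $dP\times ds$-a.e.\ identity to a continuous version via the $\Lambda$-continuity of $D_xu$. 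If you want to keep your direct projection argument, the missing step you must supply is the proof that $E[A_h\,\Delta B\mid\mathcal F_s]=o(h)$.
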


A direct consequence of Proposition \ref{w6} is the following corollary.

\begin{corollary}
Assume that the same conditions of Proposition \ref{w6}. Then, for the above
continuous version $Z_{{\gamma_{t}}}$, for each $p\geq2$, there exists a
constant $C_{p}>0$, depending on $C,T,k,p$, such that
\[
Z_{{\gamma_{t}}}(s)\leq C_{p}(1+\Vert B_{s}^{\gamma_{t}}\Vert^{q}),\
\ \forall s\in\lbrack t,T],\ \ P-a.s.,
\]
and
\[
E[\sup\limits_{s\in\lbrack t,T]}\Vert Z_{{\gamma_{t}}}(s)\Vert^{p}]\leq
C_{p}(1+\Vert\gamma_{t}\Vert^{q}).
\]

\end{corollary}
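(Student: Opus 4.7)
The plan is to combine the identification of $Z$ with a vertical derivative of $u$ from Proposition \ref{w6} with the regularity of $u$ from Corollary \ref{w1}, and then use standard Brownian moment estimates.

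First I would start from the pointwise representation $Z_{\gamma_t}(s)=D_x u(B_s^{\gamma_t})$ $P$-a.s.\ given by Proposition \ref{w6}. By Corollary \ref{w1}, $u\in \mathbb{C}_{l,lip}^{0,2}(\Lambda)$, so by the very definition of the class $\mathbb{C}_{l,lip}^{0,2}(\Lambda)$ the functional $\varphi=D_x u$ satisfies
\[
|D_x u(\gamma_t)-D_x u(\bar\gamma_{\bar t})| \leq C\bigl(1+\|\gamma_t\|^q+\|\bar\gamma_{\bar t}\|^q\bigr) d_\infty(\gamma_t,\bar\gamma_{\bar t}).
\]
Specializing with the trivial path $\bar\gamma_0\equiv 0\in \Lambda_0$ (so that $\|\bar\gamma_0\|=0$ and $d_\infty(\gamma_t,\bar\gamma_0)\leq \|\gamma_t\|+t$) yields the polynomial growth bound
\[
|D_x u(\gamma_t)| \leq C\bigl(1+\|\gamma_t\|^{q'}\bigr)
\]
for some (possibly larger) exponent $q'$, uniformly in $t\in[0,T]$. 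Substituting $\gamma_t=B_s^{\gamma_t}$ gives the first inequality of the corollary, with an adjusted constant $C_p$ and exponent $q$ absorbing $q'$.

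Next I would deduce the second inequality by taking the supremum over $s\in[t,T]$ on both sides of the first inequality, raising to the $p$-th power, and taking expectation:
\[
E\bigl[\sup_{s\in[t,T]}\|Z_{\gamma_t}(s)\|^p\bigr]
\leq C_p\, E\bigl[1+\sup_{s\in[t,T]}\|B_s^{\gamma_t}\|^{pq}\bigr].
\]
From the explicit form
\[
B^{\gamma_t}(u)=\gamma_t(u)\mathbf{1}_{[0,t]}(u)+\bigl(\gamma_t(t)+B(u)-B(t)\bigr)\mathbf{1}_{(t,T]}(u),
\]
I get the deterministic/stochastic decomposition
\[
\sup_{s\in[t,T]}\|B_s^{\gamma_t}\|\leq \|\gamma_t\|+\sup_{r\in[t,T]}|B(r)-B(t)|.
\]
A classical Brownian estimate (e.g.\ via Doob's or Burkholder--Davis--Gundy's inequality) gives $E[\sup_{r\in[t,T]}|B(r)-B(t)|^{pq}]\leq K_{p,q,T}$, and combining with the elementary inequality $(a+b)^{pq}\leq 2^{pq-1}(a^{pq}+b^{pq})$ produces the second estimate.

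The only delicate point I foresee is the passage from the Lipschitz-with-polynomial-weight estimate inherent in the class $\mathbb{C}_{l,lip}^{0,2}(\Lambda)$ to a clean polynomial growth bound for $D_x u$; this requires choosing a suitable reference path, which I handle above with $\bar\gamma_0\equiv 0$. Everything else is a routine combination of Proposition \ref{w6}, Corollary \ref{w1}, and standard Brownian moment bounds.
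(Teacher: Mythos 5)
Your argument is correct and is precisely the intended one: the paper gives no proof, stating only that the corollary is a direct consequence of Proposition \ref{w6}, and your route (identify $Z_{\gamma_t}(s)=D_xu(B_s^{\gamma_t})$, extract polynomial growth of $D_xu$ from the $\mathbb{C}^{0,2}_{l,lip}(\Lambda)$ bound in Corollary \ref{w1} via a fixed reference path, then bound $\sup_s\Vert B_s^{\gamma_t}\Vert$ by $\Vert\gamma_t\Vert$ plus the running maximum of $B(\cdot)-B(t)$ and apply standard Brownian moment estimates) is exactly the natural fleshing-out of that remark. No gaps.
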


Before proceeding to the proof, we need the following lemma
essentially from Pardoux and Peng \cite{PP1} (the Lemma 2.5 ).

\begin{lemma}
\label{w7} Let $\gamma_{t}$ be given. For some $\bar{t}\in\lbrack
t,T]$, suppose
$\Phi(\gamma)=\varphi(\gamma(\bar{t}),\gamma(T)-\gamma(\bar{t}))$,
where $\varphi$ is in $C_{p}^{3}(\mathbb{R}^{2d},\mathbb{R}^{m})$.
Let
$f(\gamma_{s},y,z)=\bar{f}_{1}(s,\gamma_{s}(s),y,z)I_{[0,\bar{t})}(s)+\bar{f}_{2}(s,\gamma_{s}(\bar{t}),\gamma_{s}(s)-\gamma_{s}(\bar{t}),y,z)I_{[\bar
{t},T]}(s)$, where $\bar{f}_{1}:[0,T]\times\mathbb{R}^{d}\times\mathbb{R}%
^{m}\times\mathbb{R}^{m\times d}\mapsto\mathbb{R}^{m}$, $\bar{f}_{2}%
:[0,T]\times\mathbb{R}^{2d}\times\mathbb{R}^{m}\times\mathbb{R}^{m\times
d}\mapsto\mathbb{R}^{m}$ satisfy the assumption $(\mathbf{H3})$,
then for each $s\in\lbrack t,T]$,
\[
D_{x}u(B_{s}^{\gamma_{t}})=Z_{\gamma_{t}}(s),\ \ \ a.s..
\]

\end{lemma}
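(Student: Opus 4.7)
The plan is to decompose the BSDE at the distinguished time $\bar{t}$, so that on each of the two subintervals $[t,\bar{t}]$ and $[\bar{t},T]$ the problem becomes a classical state-dependent Markovian BSDE to which the standard Pardoux--Peng nonlinear Feynman--Kac formula (\cite{PP1}, Lemma 2.5) applies. Then I only have to match the two deterministic functions obtained this way with the path-functional $u$ and identify its vertical derivative.

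On the interval $[\bar{t},T]$, condition on the position $B^{\gamma_t}(\bar{t})$ and observe that both the terminal $\varphi(B^{\gamma_t}(\bar t), B(T)-B(\bar t))$ and the generator $\bar f_2(r,B^{\gamma_t}(\bar t),B(r)-B(\bar t),y,z)$ are state-dependent functions of $(x,y):=(B^{\gamma_t}(\bar t),B(r)-B(\bar t))$ satisfying $(\mathbf{H3})$. By the classical Pardoux--Peng Feynman--Kac formula there exists $v_2\in C^{0,2}_p([\bar t,T]\times\mathbb{R}^{2d})$ such that, for $r\in[\bar t,T]$,
\begin{equation*}
Y_{\gamma_t}(r)=v_2(r,B^{\gamma_t}(\bar t),B(r)-B(\bar t)),\qquad Z_{\gamma_t}(r)=\partial_y v_2(r,B^{\gamma_t}(\bar t),B(r)-B(\bar t)),\ \ a.s.
\end{equation*}
In particular, $Y_{\gamma_t}(\bar t)=\tilde\varphi(B^{\gamma_t}(\bar t))$ with $\tilde\varphi(x):=v_2(\bar t,x,0)$, and the propagation of smoothness in Pardoux--Peng guarantees $\tilde\varphi\in C^2_p(\mathbb{R}^d,\mathbb{R}^m)$.

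On the interval $[t,\bar t]$, the BSDE with terminal $\tilde\varphi(B^{\gamma_t}(\bar t))$ and generator $\bar f_1(r,B^{\gamma_t}(r),y,z)$ is again a classical state-dependent Markovian BSDE. Apply Pardoux--Peng once more: there exists $v_1\in C^{0,2}_p([t,\bar t]\times \mathbb{R}^d)$ with
\begin{equation*}
Y_{\gamma_t}(r)=v_1(r,B^{\gamma_t}(r)),\qquad Z_{\gamma_t}(r)=\partial_x v_1(r,B^{\gamma_t}(r)),\ \ a.s.,\ r\in[t,\bar t].
\end{equation*}

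It remains to read off the vertical derivative of $u$ defined in (\ref{eq4}). For $s\in[t,\bar t]$, flow uniqueness for the BSDE started from $(s,\gamma_s)$ shows that $u(\gamma_s)$ depends on $\gamma_s$ only through the endpoint $\gamma_s(s)$, and in fact $u(\gamma_s)=v_1(s,\gamma_s(s))$; therefore $u(\gamma_s^x)=v_1(s,\gamma_s(s)+x)$ and $D_x u(\gamma_s)=\partial_x v_1(s,\gamma_s(s))$, which equals $Z_{\gamma_t}(s)$ at $\gamma_s=B^{\gamma_t}_s$. For $s\in[\bar t,T]$ the same flow argument gives $u(\gamma_s)=v_2(s,\gamma_s(\bar t),\gamma_s(s)-\gamma_s(\bar t))$; since the vertical perturbation $\gamma_s^x$ only modifies $\gamma_s(s)$ and leaves $\gamma_s(\bar t)$ fixed (for $s>\bar t$), we obtain $D_x u(\gamma_s)=\partial_y v_2(s,\gamma_s(\bar t),\gamma_s(s)-\gamma_s(\bar t))$, which again agrees with $Z_{\gamma_t}(s)$. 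The two identifications match at $s=\bar t$ by the continuity established in Corollary~\ref{w1} together with the continuous version of $Z_{\gamma_t}$.

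The only genuinely technical point is justifying that $\tilde\varphi$ inherits enough regularity ($C^2_p$ with polynomial growth) for the second application of Pardoux--Peng to be legitimate; this is exactly the content of Pardoux--Peng's regularity theorem applied to the problem on $[\bar t,T]$, and does not require any new argument beyond what is cited in \cite{PP1}. Once this is in hand the rest is bookkeeping of the two pieces together with the observation that the vertical derivative $D_x$ acts on the terminal coordinate of the path, which in each regime coincides with the classical spatial derivative of the underlying Markovian value function.
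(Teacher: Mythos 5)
Your proposal is correct and follows essentially the same route as the paper: split the problem at $\bar{t}$, apply the classical Pardoux--Peng Markovian Feynman--Kac representation on $[\bar{t},T]$ (with $B^{\gamma_t}(\bar{t})$ frozen as a parameter) to get $v_2$, feed $v_2(\bar{t},\cdot,0)$ as terminal data to a second Markovian problem on $[t,\bar{t}]$ to get $v_1$, and then identify $D_x u$ with the classical derivative in the terminal coordinate of the relevant value function. The paper merely makes the two associated quasilinear PDEs explicit and cites Theorems 3.1--3.2 of Pardoux--Peng for the $C^{1,2}$ regularity (which covers your point about $\tilde{\varphi}$), so no substantive difference.
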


\begin{proof}
To simplify presentation, we shall only prove the case when $m=d=1$, as the
higher dimensional case can be treated in the same way without substantial
difficulty. \newline In this case BSDE (\ref{p1}) is rewritten, for
$s\in\lbrack t,\bar{t}]$, BSDE (\ref{p1}) is
\begin{align*}
Y_{\gamma_{s}}(u)= &
\varphi(\gamma_{s}(s)+B(\bar{t})-B(s),B(T)-B(\bar
{t}))-\int_{u}^{T}Z_{\gamma_{s}}(r)dB(r)\\
&
+\int_{u}^{T}\bar{f}_{2}(r,\gamma_{s}(s)+B(\bar{t})-B(s),B(r)-B(\bar
{t}),Y_{\gamma_{s}}(r),Z_{\gamma_{s}}(r))dr,\ \ u\in\lbrack\bar{t},T],\\
\begin{split}
Y_{\gamma_{s}}(u)= &  Y_{\gamma_{s}}(\bar{t})-\int_{u}^{\bar{t}}Z_{\gamma_{s}%
}(r)dB(r)\\
&  +\int_{s}^{\bar{t}}\bar{f}_{1}(r,\gamma(s)+B(r)-B(s),Y_{\gamma_{s}%
}(r),Z_{\gamma_{s}}(r))dr,\ \ u\in\lbrack s,\bar{t}],
\end{split}
\end{align*}
for $s\in\lbrack\bar{t},T]$, BSDE (\ref{p1}) is
\begin{align*}
Y_{\gamma_{s}}(u)= &  \varphi(\gamma_{s}(\bar{t}),\gamma_{s}(s)-\gamma
_{s}(\bar{t})+B(T)-B(s))-\int_{u}^{T}Z_{\gamma_{s}}(r)dB(r)\\
&  +\int_{u}^{T}\bar{f}_{2}(r,\gamma_{s}(\bar{t}),\gamma_{s}(s)-\gamma_{s}%
(\bar{t})+B(u)-B(s)),Y_{\gamma_{s}}(r),Z_{\gamma_{s}}(r))dr,\ \
u\in\lbrack s,T].
\end{align*}

Consider the following system of quasilinear parabolic differential equations,
defined on $(s,x,y)\in\lbrack\bar{t},T]\times\mathbb{R}^{2}$ and parameterized
by $y\in\mathbb{R}$,
\begin{align*}%
\begin{cases}
&
\partial_{s}v_{2}(s,x,y)+\frac{1}{2}\partial_{yy}v_{2}(s,x,y)+\bar{f}
_{2}(s,x,y,v_{2}(s,x,y),\partial_{y}v_{2}(s,x,y))=0,\\
& v_{2}(T,x,y)=\varphi(x,y),
\end{cases}
\end{align*}
and then, another one defined on $(s,x)\in\lbrack t,\bar{t}]\times\mathbb{R}%
$,
\begin{align*}%
\begin{cases}
& \partial_{s}v_{1}(s,x)+\frac{1}{2}\partial_{xx}v_{1}(s,x)+\bar{f}_{1}%
(s,x,v_{1}(s,x),\partial_{x}v_{1}(s,x))=0,\\
& v_{1}(\bar{t},x)=v_{2}(\bar{t},x,0).
\end{cases}
\end{align*}

Following the Theorem 3.1 and 3.2 of Pardoux and Peng \cite{PP1} and the
definition of $u$, we have $v_{2}$ is of class $C^{1,2}([\bar{t}%
,T]\times\mathbb{R}^{2},\mathbb{R})$, $v_{1}$ is of class $C^{1,2}([t,\bar
{t}]\times\mathbb{R},\mathbb{R})$, and
\[
u(\gamma_{s})=v_{1}(s,\gamma_{s}(s))I_{[t,\bar{t}]}(s)+v_{2}(s,\gamma_{s}%
(\bar{t}),\gamma_{s}(s)-\gamma_{s}(\bar{t}))I_{[\bar{t},T]}(s).
\]
Furthermore, we have, $a.s.$,
\begin{align*}
&  Y_{\gamma_{t}}(s)=v_{1}(s,\gamma_{t}(t)+B(s)-B(t)),\ \ t\leq s\leq\bar
{t},\\
&  Y_{\gamma_{t}}(s)=v_{2}(s,\gamma_{t}(t)+B(\bar{t})-B(t),B(s)-B(\bar
{t})),\ \ \bar{t}\leq s\leq T,\\
&  Z_{\gamma_{t}}(s)=\partial_{x}v_{1}(s,\gamma_{t}(t)+B(s)-B(t)),\ \ t\leq
s\leq\bar{t},\\
&  Z_{\gamma_{t}}(s)=\partial_{y}v_{2}(s,\gamma_{t}(t)+B(\bar{t}%
)-B(t),B(s)-B(\bar{t})),\ \ \bar{t}\leq s\leq T.
\end{align*}
Indeed, we can directly check it by It\^{o}'s formula and the uniqueness of BSDE.

Thus for each $s\in\lbrack t,T]$, by the definition of vertical derivative, we
have
\[
D_{x}u(B_{s}^{\gamma_{t}})=Z_{{\gamma_{t}}}(s),\ \ \ a.s..
\]
Thus $Z_{{\gamma_{t}}}(s)$ have a continuous version. In particular,%
\[
Z_{\gamma_{t}}(t)=D_{x}u(\gamma_{t}),\ \ \gamma_{t}\in\Lambda,
\]
which completes the proof.
\end{proof}

We now give the proof of Proposition \ref{w6}.\newline

\bigskip

\noindent\textbf{Proof of Proposition \ref{w6}}. {For each fixed }$t\in
\lbrack0,T)$ and{ positive integer} $n$,, we introduce a mapping $\gamma
^{n}(\bar{\gamma}_{s}):\Lambda_{s}\mapsto{\Lambda_{s}}$:
\[
\gamma^{n}(\bar{\gamma}_{s})(r)=\bar{\gamma}_{s}I_{[0,t)}(r)+\sum
\limits_{k=0}^{n-1}\gamma_{s}(t_{k+1}^{n}\wedge s)I_{[t_{k}^{n}\wedge
s,t_{k+1}^{n}\wedge s)}(r)+\gamma_{s}(s)I_{[s]}(r),
\]
where $t_{k}^{n}=t+\frac{k(T-t)}{n}$, $k=0,1,\cdots,n$, and set
\[
\Phi^{n}(\bar{\gamma}):=\Phi(\gamma^{n}(\bar{\gamma})),\ \ \ \ \ f^{n}%
(\bar{\gamma}_{s},y,z):=f(\gamma^{n}(\bar{\gamma}_{s}),y,z).
\]
For each $n$, there exist some $\varphi_{n}$ defined on $\Lambda_{t}%
\times\mathbb{R}^{n\times d}$ and $\psi_{n}$ defined on $[t,T]\times
\Lambda_{t}\times\mathbb{R}^{n\times d}\times\mathbb{R}^{m}\times
\mathbb{R}^{m\times d}$ such that,
\begin{align*}
&  \Phi^{n}(\bar{\gamma})=\varphi_{n}(\bar{\gamma}_{t},\bar{\gamma}(t_{1}%
^{n})-\bar{\gamma}(t),\cdots,\bar{\gamma}(t_{n}^{n})-\bar{\gamma}(t_{n-1}%
^{n})),\\
&  f^{n}(\bar{\gamma}_{s},y,z)=\psi_{n}(s,\bar{\gamma}_{t},\bar{\gamma}%
_{s}(t_{1}^{n}\wedge s)-\bar{\gamma}_{s}(t),\cdots,\gamma_{s}(t_{n}^{n}\wedge
s)-\gamma_{s}(t_{n-1}^{n}\wedge s),y,z).
\end{align*}
Indeed, set
\begin{align*}
&  \bar{\varphi}_{n}(\bar{\gamma}_{t},x_{1},\cdots,x_{n}):=\Phi(\bar{\gamma
}(s)I_{[0,t)}(s)+\sum\limits_{k=1}^{n}x_{i}I_{[t_{k-1}^{n},t_{k}^{n}%
)}(s)+x_{n}I_{[T]}(s)),\\
&  \varphi_{n}(\bar{\gamma}_{t},x_{1},\cdots,x_{n}):=\bar{\varphi}_{n}%
(\bar{\gamma}_{t},\bar{\gamma}(t)+x_{1},\bar{\gamma}(t)+x_{1}+x_{2}%
,\cdots,\bar{\gamma}(t)+\sum\limits_{i=1}^{n}x_{i}).
\end{align*}
By the assumptions $(\mathbf{H}1)$ and $(\mathbf{H}2)$, $\varphi_{n}%
(\bar{\gamma}_{t}^{x_{0}},x_{1},\cdots,x_{n})$ is a $C_{p}^{3}$-function of
$x_{0}$,$\cdots,x_{n}$ for each fixed $\bar{\gamma}_{t}$, in particular,
$\partial_{x_{i}}\varphi_{n}(x_{1},\cdots,x_{n})=\Phi_{\gamma_{t_{i-1}^{n}}%
}^{\prime}(\bar{\gamma}(s)I_{[0,t)}(s)+\sum\limits_{k=1}^{n}x_{i}%
I_{[t_{k-1}^{n},t_{k}^{n})}(s)+x_{n}I_{[T]}(s))$. Furthermore, we
can check that
$\psi_{n}(s,\bar{\gamma}_{t}^{x_{0}},x_{1},\cdots,x_{n},y,z)$
satisfy assumption $(\mathbf{H3})$ for each fixed
$\bar{\gamma}_{t}$.

Consider the following BSDE, for any $\bar{t}\geq t$, $\bar{\gamma}_{\bar{t}%
}\in\Lambda_{\bar{t}}$,
\[
Y_{\bar{\gamma}_{\bar{t}}}^{(n)}(s)=\Phi^{(n)}(B^{\bar{\gamma}_{\bar{t}}%
})+\int_{s}^{T}f^{n}(B_{r}^{\bar{\gamma}_{\bar{t}}},Y_{\bar{\gamma}_{\bar{t}}%
}^{(n)}(r),Z_{\bar{\gamma}_{\bar{t}}}^{(n)}(r))dr-\int_{s}^{T}Z_{\bar{\gamma
}_{\bar{t}}}^{(n)}(r)dB(r),\ \ s\in\lbrack\bar{t},T],
\]
and set
\[
u^{(n)}(\bar{\gamma}_{\bar{t}})=Y_{\bar{\gamma}_{\bar{t}}}^{(n)}%
(\bar{t}),\ \ \ \ \bar{\gamma}_{\bar{t}}\in\Lambda.
\]
Iterating the argument as the above Lemma \ref{w7}, we can get for each
$s\in\lbrack t,T]$,
\[
D_{x}u^{(n)}(B_{s}^{\gamma_{t}})=Z_{\gamma_{t}}^{(n)}(s),\ \ a.s..
\]
By the Corollary \ref{corol1},
\begin{align*}
&  |u^{(n)}(\bar{\gamma}_{\bar{t}})-u(\bar{\gamma}_{\bar{t}})|\\
\leq &  CE[|\Phi^{(n)}(B^{\bar{\gamma}_{\bar{t}}})-\Phi(B^{\bar{\gamma}%
_{\bar{t}}})|^{2}+\int_{0}^{T}|f(B_{r}^{\bar{\gamma}_{\bar{t}}},Y_{\bar
{\gamma}_{\bar{t}}}^{(n)}(r),Z_{\bar{\gamma}_{\bar{t}}}^{(n)}(r))-f^{n}%
(B_{r}^{\bar{\gamma}_{\bar{t}}},Y_{\bar{\gamma}_{\bar{t}}}^{(n)}%
(r),Z_{\bar{\gamma}_{\bar{t}}}^{(n)}(r))|^{2}dr]^{\frac{1}{2}}\\
\leq &  C_{1}E[|(1+\Vert\bar{\gamma}_{\bar{t}}\Vert^{k}+\sup\limits_{s}%
|B(s)-B(t)|^{k})\Vert\gamma^{n}(B^{\bar{\gamma}_{\bar{t}}})-B^{\bar{\gamma
}_{\bar{t}}}\Vert^{2}]^{\frac{1}{2}}\\
\leq &  C_{1}((1+\Vert\bar{\gamma}_{\bar{t}}\Vert^{k})(E[\sup\limits_{s}%
|B(s)-\sum\limits_{k=1}^{n-1}B(t_{k+1}^{n})I_{[t_{k}^{n},t_{k+1}^{n})}%
(s)|^{4}]+\Vert\gamma^{n}(\bar{\gamma}_{\bar{t}})-\bar{\gamma
}_{\bar{t}}\Vert^4)^{\frac{1}{4}}\\
\leq &  C_{2}(1+\Vert\bar{\gamma}_{\bar{t}}\Vert^{k})(\frac{1}{n^4}+\Vert\gamma^{n}(\bar{\gamma}_{\bar{t}})-\bar{\gamma
}_{\bar{t}}\Vert).
\end{align*}
Moreover, we have
\begin{align*}
&  |D_{x}u^{(n)}(\bar{\gamma}_{\bar{t}})-D_{x}u(\bar{\gamma}_{\bar{t}})|\leq
 C_{2}(1+\Vert\bar{\gamma}_{\bar{t}}\Vert^{k})(\frac{1}{n^4}+\Vert\gamma^{n}(\bar{\gamma}_{\bar{t}})-\bar{\gamma
}_{\bar{t}}\Vert),,\\
&  |D_{xx}u^{(n)}(\bar{\gamma}_{\bar{t}})-D_{xx}u(\bar{\gamma}_{\bar{t}})|\leq
 C_{2}(1+\Vert\bar{\gamma}_{\bar{t}}\Vert^{k})(\frac{1}{n^4}+\Vert\gamma^{n}(\bar{\gamma}_{\bar{t}})-\bar{\gamma
}_{\bar{t}}\Vert).
\end{align*}
It follows from, for each $p\geq2$,
\begin{align*}
&\lim\limits_{n}E[\sup\limits_{s\in\lbrack t,T]}|D_{x}u^{(n)}(B_{s}^{\gamma
_{t}})-D_{x}u(B_{s}^{\gamma_{t}})|^{p}]\\
\leq & C_2 \lim\limits_{n}E[\sup\limits_{s\in\lbrack t,T]}|(1+\Vert B_{s}^{\gamma_{t}} \Vert^{k})(\frac{1}{n^4}+\Vert\gamma^{n}(B_{s}^{\gamma_{t}})-B_{s}^{\gamma_{t}}\Vert)|^{p}]\\
=&0,
\end{align*}
and $\lim\limits_{n}E[\int_{t}^{T}|Z_{\gamma_{t}}(u)-Z_{\gamma_{t}}%
^{(n)}(u)|^{2}du|^{\frac{p}{2}}]=0$ that
\[
D_{x}u(B_{s}^{\gamma_{t}})=Z_{\gamma_{t}}(s),\ \ dP\times ds-a.e.\ \text{on}%
\ [t,T],
\]
which completes the proof.

\section{Path-dependent PDE}

We now relate our BSDE to the following system of path-dependent version of
the Kolmogorov backward equation:
\begin{align}%
\begin{cases}
\label{w8} & D_{t}u(\gamma_{t})+\frac{1}{2}D_{xx}u(\gamma_{t})+f(\gamma
_{t},u(\gamma_{t}),D_{x}u(\gamma_{t}))=0,\ \gamma_{t}\in\Lambda,\ t\in
\lbrack0,T),\\
& u(\gamma)=\Phi(\gamma),\ \ \gamma\in\Lambda_{T.}%
\end{cases}
\end{align}
where $u:\Lambda\mapsto\mathbb{R}^{m}$ is a function on $\Lambda$.
We immediately obtain.

\begin{theorem}
\label{Th4.1}Assume that assumptions $(\mathbf{H1})$ and $(\mathbf{H2})$ hold
and let $u\in\mathbb{C}^{1,2}(\Lambda)$ be a solution of the equation
(\ref{w8}). Then we have $u(\gamma_{t})=Y_{{\gamma_{t}}}(t)$, for each
$\gamma_{t}\in{\Lambda}$, where $(Y_{{\gamma_{t}}}(s),Z_{{\gamma_{t}}%
}(s))_{t\leq s\leq T}$ is the unique solution of the BSDE (\ref{p1}).
Consequently, the path-dependent PDE (\ref{w8}) has at most one solution.
\end{theorem}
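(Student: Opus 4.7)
The plan is to apply the functional It\^o formula (Theorem \ref{w2}) to the given $C^{1,2}$ solution $u$ along the lifted Brownian path $B^{\gamma_t}$, and then use the PDE satisfied by $u$ to recognize the resulting equation as precisely the BSDE (\ref{p1}). Uniqueness of the BSDE then forces $u$ to coincide with $Y_{\gamma_t}$.

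More concretely, I would fix $\gamma_t \in \Lambda$ and consider the continuous semimartingale $X(s) := B^{\gamma_t}(s)$ for $s \in [t,T]$, which equals $\gamma_t$ on $[0,t]$ and behaves like a Brownian motion shifted by $\gamma_t(t)$ afterwards. Since $u \in \mathbb{C}^{1,2}(\Lambda)$, Theorem \ref{w2} applied on $[t,T]$ gives
\begin{align*}
u(X_T) - u(X_t) = \int_t^T D_s u(X_s)\,ds + \int_t^T D_x u(X_s)\,dB(s) + \frac{1}{2}\int_t^T D_{xx} u(X_s)\,ds \quad \text{a.s.}
\end{align*}
Using that $u$ solves (\ref{w8}), the $D_s u + \frac{1}{2} D_{xx} u$ terms collapse to $-f(X_s, u(X_s), D_x u(X_s))$, and the terminal condition gives $u(X_T) = \Phi(B^{\gamma_t})$. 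Rearranging yields, for every $s \in [t,T]$ (by applying the same argument on $[s,T]$),
\begin{align*}
u(X_s) = \Phi(B^{\gamma_t}) + \int_s^T f(X_r, u(X_r), D_x u(X_r))\,dr - \int_s^T D_x u(X_r)\,dB(r).
\end{align*}
In other words, $(u(X_s), D_x u(X_s))_{s \in [t,T]}$ solves exactly the BSDE (\ref{p1}) with data $(\Phi, f)$.

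By the uniqueness part of the existence theorem for BSDEs recalled before Lemma \ref{comp}, this forces $(u(X_s), D_x u(X_s)) = (Y_{\gamma_t}(s), Z_{\gamma_t}(s))$ on $[t,T]$. Evaluating at $s = t$ gives $u(\gamma_t) = Y_{\gamma_t}(t)$, and uniqueness of the solution to (\ref{w8}) is immediate.

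The main technical obstacle is verifying that $(u(X_\cdot), D_x u(X_\cdot))$ lies in $S^2(t,T) \times M^2(t,T)$ so that BSDE uniqueness applies. The definition of $\mathbb{C}^{1,2}(\Lambda)$ only gives $\Lambda$-continuity of $u$ and $D_x u$, not a priori polynomial growth. I would handle this either by invoking Corollary \ref{w1}, which yields the needed $\mathbb{C}^{0,2}_{l,lip}$ regularity of $u$ (so that $u$ and $D_x u$ have at most polynomial growth along $X$, ensuring the required integrability via moment estimates for Brownian paths), or, if the hypotheses of the theorem do not immediately supply such growth, by a standard localization argument: apply the functional It\^o formula on stopping times $\tau_n = \inf\{s : |X(s)| \geq n\} \wedge T$, obtain the identity up to $\tau_n$, and pass to the limit using dominated convergence after noting that the stochastic integral is a true martingale up to $\tau_n$. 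Everything else in the argument is straightforward substitution.
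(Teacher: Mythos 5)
Your proof is correct and follows essentially the same route as the paper: apply the functional It\^o formula to $u(B_s^{\gamma_t})$ on $[t,T]$, use the equation (\ref{w8}) to identify the drift with $-f$, and conclude by uniqueness of the solution of the BSDE (\ref{p1}). Your extra care about verifying that $(u(X_\cdot),D_xu(X_\cdot))\in S^2(t,T)\times M^2(t,T)$ is a point the paper's proof passes over silently, and your proposed remedies (polynomial growth via the $\mathbb{C}^{0,2}_{l,lip}$ bounds, or localization) are appropriate.
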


\begin{proof}
Applying the functional It\^{o} formula (\ref{w2}) to $u(B_{s}^{\gamma_{t}})$
on $s\in\lbrack t,T)$, we have%
\[
du(B_{s}^{\gamma_{t}})=(D_{s}u(B_{s}^{\gamma_{t}})+\frac{1}{2}D_{xx}%
u(B_{s}^{\gamma_{t}}))ds+D_{x}u(B_{s}^{\gamma_{t}})dB(s)\text{.}%
\]
Since $u$ solves PDE (\ref{w8}), thus%
\[
-du(B_{s}^{\gamma_{t}})=f(B_{s}^{\gamma_{t}},u(B_{s}^{\gamma_{t}}%
),D_{x}u(B_{s}^{\gamma_{t}}))ds-D_{x}u(B_{s}^{\gamma_{t}})dB(s)\text{,}%
\]
which, with $u(B_{T}^{\gamma_{t}})=\Phi(B_{T}^{\gamma_{t}})$ and $u\in
C^{1,2}(\Lambda)$, implies that $(Y_{{\gamma_{t}}}(s),Z_{{\gamma_{t}}%
}(s))=(u(B^{\gamma_{t}}(s)),D_{x}u(B_{s}^{\gamma_{t}}(s))$ is the unique
solution of BSDE (\ref{p1}). In particular $u(\gamma_{t})=Y_{{\gamma_{t}}}%
(t)$, which completes the proof.
\end{proof}

By using this theorem and the classical comparison theorem of BSDE
(Lemma \ref{comp}), we have the comparison theorem of Path-dependent
PDE:

\begin{corollary}
We assume $m=1$ and that $f=f_{i}$, $\Phi=\Phi_{i}$, $i=1,2$ satisfy
the same assumptions as in Theorem \ref{Th4.1} as well as:\newline$\
\bullet$ $f_{1}(\gamma_{t},y,z)\leq f_{2}(\gamma_{t},y,z)$, for each
$(\gamma _{t},y,z)\in(\Lambda\times\mathbb{R}\times\mathbb{R}^{d})$;
\newline$\bullet$ $\Phi_{1}(\gamma_{T})\leq\Phi_{2}(\gamma_{T})$,
for each $\gamma_{T}\in \Lambda_{T}$. \newline Let
$u_{i}\in\mathbb{C}^{1,2}(\Lambda)$ be the solution of equation
(\ref{w8}) associated with $(f,\Phi)=(f_{i},\Phi_{i})$, $i=1,2$.
Then we also have $u_{1}(\gamma_{t})\leq u_{2}(\gamma_{t})$, for
each $\gamma_{t}\in\Lambda$.
\end{corollary}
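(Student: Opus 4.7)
The plan is to reduce this path-dependent PDE comparison result to the classical BSDE comparison theorem (Lemma \ref{comp}) via the representation supplied by Theorem \ref{Th4.1}.

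First, I would fix an arbitrary $\gamma_t \in \Lambda$ and, for $i=1,2$, invoke Theorem \ref{Th4.1} applied to $u_i$ (which by hypothesis lies in $\mathbb{C}^{1,2}(\Lambda)$ and solves the path-dependent PDE \eqref{w8} with data $(f_i, \Phi_i)$). This yields
\[
u_i(\gamma_t) = Y^{i}_{\gamma_t}(t), \qquad i = 1, 2,
\]
where $(Y^{i}_{\gamma_t}, Z^{i}_{\gamma_t})$ is the unique adapted solution of the BSDE \eqref{p1} with generator $f_i$ and terminal value $\Phi_i(B^{\gamma_t})$.

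Second, I would verify that the hypotheses of the BSDE comparison theorem (Lemma \ref{comp}) are met for these two BSDEs on $[t,T]$. The pointwise inequalities $\Phi_1 \leq \Phi_2$ on $\Lambda_T$ and $f_1 \leq f_2$ on $\Lambda \times \mathbb{R} \times \mathbb{R}^d$, combined with the fact that $B^{\gamma_t}_r \in \Lambda_r$ almost surely for every $r \in [t,T]$, immediately give
\[
\Phi_1(B^{\gamma_t}) \leq \Phi_2(B^{\gamma_t}) \ \text{a.s.}, \qquad f_1(B^{\gamma_t}_r, y, z) \leq f_2(B^{\gamma_t}_r, y, z) \ \text{a.s., for all } (y,z).
\]
Moreover, both generators satisfy the standing Lipschitz/polynomial growth conditions from \textbf{(H2)}, and both terminals lie in $L^2(\mathcal{F}_T^t)$ by the $C^2_{l,lip}$ hypothesis \textbf{(H1)} together with the moment bounds on Brownian motion, so Lemma \ref{comp} is applicable.

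Third, applying Lemma \ref{comp} directly gives $Y^{1}_{\gamma_t}(s) \leq Y^{2}_{\gamma_t}(s)$ a.s.\ for every $s \in [t,T]$; specializing to $s = t$ (where $Y^{i}_{\gamma_t}(t)$ is deterministic by Corollary 3.4) and using the representation from the first step, we conclude $u_1(\gamma_t) \leq u_2(\gamma_t)$. Since $\gamma_t \in \Lambda$ was arbitrary, the proof is complete. There is no real obstacle here: the only subtlety is checking that the BSDE-level comparison hypotheses are genuinely implied by the pointwise path-space hypotheses, which is immediate because $B^{\gamma_t}$ takes values in $\Lambda$.
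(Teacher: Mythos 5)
Your proof is correct and follows exactly the route the paper intends: the paper gives no separate proof of this corollary, merely noting that it follows from Theorem \ref{Th4.1} combined with the classical BSDE comparison theorem (Lemma \ref{comp}), which is precisely your argument. Your added care in checking that the pointwise hypotheses on $(\Phi_i,f_i)$ transfer to the BSDE data along $B^{\gamma_t}$, and that $Y^{i}_{\gamma_t}(t)$ is deterministic, fills in the details the paper leaves implicit.
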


We are now in a position to prove the converse to the above result :

\begin{theorem}
\label{w10} We make assumptions $(\mathbf{H}1)$-$(\mathbf{H}2)$. Then the
function $u$ defined in (\ref{eq4}) is the unique $\mathbb{C}^{1,2}(\Lambda
)$-solution of the path-dependent PDE (\ref{w8}).
\end{theorem}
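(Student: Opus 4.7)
The uniqueness part of Theorem \ref{w10} is immediate from Theorem \ref{Th4.1}, so the task reduces to existence. Corollary \ref{w1} already gives $u \in \mathbb{C}^{0,2}_{l,lip}(\Lambda)$, hence $D_x u$ and $D_{xx} u$ exist and are $\Lambda$-continuous. What remains is to produce the horizontal derivative $D_t u(\gamma_t)$, establish its $\Lambda$-continuity, and check the PDE \eqref{w8}. My plan is to obtain all three facts simultaneously by deriving the identity
\[
D_t u(\gamma_t) = -\tfrac{1}{2}\,\mathrm{tr}\bigl(D_{xx}u(\gamma_t)\bigr) - f\bigl(\gamma_t, u(\gamma_t), D_x u(\gamma_t)\bigr)
\]
directly from the BSDE; the $\Lambda$-continuity of $D_t u$ then follows for free from that of the right-hand side.

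The tool is the BSDE combined with a vertical Taylor expansion at the frozen path $\gamma_{t,t+h}$. Using the flow property $Y_{\gamma_t}(s) = u(B_s^{\gamma_t})$ of \eqref{p1} (standard, by uniqueness) and $Z_{\gamma_t}(s) = D_x u(B_s^{\gamma_t})$ from Proposition \ref{w6}, integrate \eqref{p1} from $t$ to $t+h$ and take expectation to kill the stochastic integral:
\[
u(\gamma_t) = E\bigl[u(B_{t+h}^{\gamma_t})\bigr] + E\!\int_t^{t+h}\! f\bigl(B_r^{\gamma_t}, u(B_r^{\gamma_t}), D_x u(B_r^{\gamma_t})\bigr)\,dr.
\]
Write $B_{t+h}^{\gamma_t} = \gamma_{t,t+h}^{\,B(t+h)-B(t)}$. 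Since $u(\gamma_{t,t+h}^{\cdot}) \in C^2(\mathbb{R}^d)$ with polynomial-growth Lipschitz Hessian (by the $\mathbb{C}^{0,2}_{l,lip}$ property), Taylor's formula with integral remainder gives
\[
u(\gamma_{t,t+h}^{x}) - u(\gamma_{t,t+h}) - \langle D_x u(\gamma_{t,t+h}), x\rangle - \tfrac{1}{2}\langle D_{xx}u(\gamma_{t,t+h})x,x\rangle = R_h(x),
\]
with $|R_h(x)| \leq C(1 + \|\gamma_t\|^q + |x|^q)|x|^3$. Substituting $x = B(t+h) - B(t)$ and using the Gaussian moments $E|B(t+h)-B(t)|^{k} = O(h^{k/2})$: the linear term vanishes, the quadratic term contributes $\tfrac{h}{2}\,\mathrm{tr}\bigl(D_{xx}u(\gamma_{t,t+h})\bigr)$, and $E|R_h(B(t+h)-B(t))| = O(h^{3/2}) = o(h)$.

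The $f$-integral is handled by dominated convergence: the integrand converges a.s.\ to $f(\gamma_t, u(\gamma_t), D_x u(\gamma_t))$ as $r \downarrow t$, by the a.s.\ continuity of $(Y_{\gamma_t}, Z_{\gamma_t})$ at $s=t$ (the latter via the continuous version supplied by Proposition \ref{w6}) and the $\Lambda$-continuity of $f$, $u$, $D_x u$, while the moment bounds of Lemma \ref{w4} furnish uniform integrability. Consequently
\[
E\!\int_t^{t+h}\! f(\cdots)\,dr = h\,f(\gamma_t, u(\gamma_t), D_x u(\gamma_t)) + o(h).
\]
Combining the two expansions, dividing by $h > 0$, and using $\Lambda$-continuity of $D_{xx} u$ to replace $\gamma_{t,t+h}$ by $\gamma_t$ in the Hessian term, the right limit $\lim_{h \downarrow 0} \bigl(u(\gamma_{t,t+h}) - u(\gamma_t)\bigr)/h$ exists and equals the claimed quantity. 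This proves simultaneously the existence of $D_t u(\gamma_t)$ in the sense of the horizontal derivative and the PDE \eqref{w8}.

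The main obstacle I anticipate is the uniform bookkeeping of the various $o(h)$ estimates so that they combine correctly. The polynomial-growth Lipschitz hypotheses $(\mathbf{H}1)$--$(\mathbf{H}2)$, coupled with the $L^p$ bounds of Lemma \ref{w4} and the path-regularity of Proposition \ref{w5}, should be exactly what is needed so that the Gaussian moments of the Brownian increments absorb all the polynomial-growth factors appearing in the remainder and in the dominated-convergence bound. Beyond this somewhat tedious but standard estimation, no new analytical ingredient should be required.
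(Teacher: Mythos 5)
There is a genuine gap at the central step of your existence argument: the identification $B_{t+h}^{\gamma_t}=\gamma_{t,t+h}^{\,B(t+h)-B(t)}$ is false. The path $B_{t+h}^{\gamma_t}$ follows the Brownian trajectory $\gamma(t)+B(r)-B(t)$ on the whole interval $(t,t+h]$, whereas $\gamma_{t,t+h}^{\,x}$ is frozen at the value $\gamma(t)$ on $[t,t+h)$ and only has its terminal position bumped to $\gamma(t)+x$; the two paths agree only at the endpoints of $[t,t+h]$. The vertical derivatives $D_xu$, $D_{xx}u$ by definition capture sensitivity to a bump of the terminal position alone, so your Taylor expansion computes $E[u(\gamma_{t,t+h}^{\,B(t+h)-B(t)})]$, not $E[u(B_{t+h}^{\gamma_t})]$, and the BSDE gives you the latter. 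The only estimate available to bridge the two is the $d_\infty$-Lipschitz bound from $\mathbb{C}^{0,2}_{l,lip}$, which yields an error of order $E\bigl[\sup_{s\in[t,t+h]}|B(s)-B(t)|\bigr]=O(\sqrt{h})$ --- not $o(h)$ --- so the discrepancy cannot be absorbed into your remainder and the claimed formula for $D_tu$ does not follow. Controlling precisely this interior path dependence is the whole difficulty of the theorem; assuming it away is circular, since the statement that $u(\gamma_{t,t+h})-E[u(B_{t+h}^{\gamma_t})]=o(h)$ is essentially equivalent to the existence of the horizontal derivative you are trying to establish.

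The paper circumvents this by approximating $\Phi$ and $f$ by discrete functionals $\Phi^{(n)},f^{(n)}$ depending only on finitely many increments (the construction in the proof of Proposition \ref{w6}). For each $n$ the corresponding $u^{(n)}$ is, on each subinterval, a classical $C^{1,2}$ function of $(s,x)$ solving a classical quasilinear PDE (Lemma \ref{w7} plus Pardoux--Peng), so the classical It\^{o} formula applies along \emph{both} the frozen path $\gamma_{t,s}$ and the Brownian path $B_s^{\gamma_t}$, giving an exact expression for $u^{(n)}(\gamma_{t,t+\delta})-u^{(n)}(B_{t+\delta}^{\gamma_t})$. The troublesome interior dependence is then isolated in the term $C^n=\int_t^{t+\delta}\bigl(D_su^{(n)}(\gamma_{t,s})-D_su^{(n)}(B_s^{\gamma_t})\bigr)ds$, which carries an extra factor of $\delta$ from the time integral and so satisfies $|C^n|\le C\delta\sup_{s\in[t,t+\delta]}|B(s)-B(t)|$, i.e.\ $E|C^n|/\delta=O(\sqrt{\delta})\to0$. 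Your direct comparison at the single time $t+h$ has no such extra factor of $h$, which is exactly why it fails. The rest of your structure (uniqueness via Theorem \ref{Th4.1}, the treatment of the $f$-integral, the Gaussian-moment bookkeeping) is sound, but the proof cannot be completed without some device of this kind.
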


\begin{proof}
From Corollary \ref{w1}, $u\in\mathbb{C}^{0,2}(\Lambda)$. Let $\delta\geq0$ be
such that $t+\delta\leq T$. From the definition of $u$,
\[
u(B_{t+\delta}^{\gamma_{t}})=Y_{{\gamma_{t}}}({t+\delta}).
\]

Hence
\[
u(\gamma_{t,t+\delta})-u(\gamma_{t})=u(\gamma_{t,\delta})-u(B_{t+\delta}%
^{\gamma_{t}})+u(B_{t+\delta}^{\gamma_{t}})-u(\gamma_{t}).
\]
By the proof of Proposition \ref{w6}, we get
\begin{align*}
u(\gamma_{t,t+\delta})-u(\gamma_{t})= &  \lim\limits_{n\rightarrow\infty
}[u^{(n)}(\gamma_{t,t+\delta})-u^{(n)}(B_{t+\delta}^{\gamma_{t}})]\\
&  -\int_{t}^{t+\delta}f(B_{s}^{\gamma_{t}},Y_{{\gamma_{t}}}(s),Z_{{\gamma
_{t}}}(s))ds+\int_{t}^{t+\delta}Z_{{\gamma_{t}}}(s)dB(s).
\end{align*}
From Lemma \ref{w7} and the Proposition 3.2 of Pardoux and Peng \cite{PP1}, we
have
\begin{align*}
&  \ \ \ \ u^{(n)}(\gamma_{t,t+\delta})-u^{(n)}(B_{t+\delta}^{\gamma_{t}})\\
= &  \int_{t}^{t+\delta}D_{s}u^{(n)}(\gamma_{t,s})ds-\int_{t}^{t+\delta
}D_{s}u^{(n)}(B_{s}^{\gamma_{t}})ds\\
&  -\int_{t}^{t+\delta}D_{x}u^{(n)}(B_{s}^{\gamma_{t}})dB(s)-\frac{1}{2}%
\int_{t}^{t+\delta}D_{xx}u^{(n)}(B_{s}^{\gamma_{t}})ds.
\end{align*}
Thus
\begin{align*}
&  u(\gamma_{t,t+\delta})-u(\gamma_{t})\\
= &  -\int_{t}^{t+\delta}D_{x}u(B_{s}^{\gamma_{t}})dB(s)-\frac{1}{2}\int%
_{t}^{t+\delta}D_{xx}u(B_{s}^{\gamma_{t}})ds\\
&  -\int_{t}^{t+\delta}f(B_{s}^{\gamma_{t}},Y_{{\gamma_{t}}}(s),Z_{{\gamma
_{t}}}(s))ds+\int_{t}^{t+\delta}Z_{{\gamma_{t}}}(s)dB(s)+\lim
\limits_{n\rightarrow\infty}C^{n},
\end{align*}
where
\[
C^{n}=\int_{t}^{t+\delta}D_{s}u^{n}(\gamma_{t,s})ds-\int_{t}^{t+\delta}%
D_{s}u^{n}(B_{s}^{\gamma_{t}})ds.
\]
It is easy to check that:%
\[
|C^{n}|\leq C\delta\sup\limits_{s\in\lbrack t,t+\delta]}|B(s)-B(t)|.
\]
Taking expectation on both sides and following the Proposition
\ref{w6},
\[
\lim\limits_{\delta\rightarrow0}\frac{u(\gamma_{t,t+\delta})-u(\gamma_{t}%
)}{\delta}=-\frac{1}{2}D_{xx}u(\gamma_{t})-f(\gamma_{t},u(\gamma_{t}%
),D_{x}u(\gamma_{t})),
\]
hence, $u\in\mathbb{C}^{1,2}(\Lambda)$ and it satisfies the equation
$(\ref{w8})$.
\end{proof}
\begin{corollary}
 We make assumptions $(\mathbf{H}1)$-$(\mathbf{H}2)$. Then $(u(B_t),D_xu(B_t))$ is the unique solution of the BSDE (\ref{BSDE1}).
\end{corollary}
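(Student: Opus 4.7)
The plan is to reduce the corollary to the two results already established: Theorem \ref{w10}, which asserts that $u$ defined in \eqref{eq4} belongs to $\mathbb{C}^{1,2}(\Lambda)$ and solves the path-dependent PDE \eqref{w8}, and the functional It\^o formula (Theorem \ref{w2}). Once existence is obtained in this way, uniqueness will be the standard BSDE uniqueness from Pardoux--Peng, so the only real work is the verification of admissibility.

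The first step is to apply the functional It\^o formula to the process $s\mapsto u(B_s)$ on the interval $[t,T]$, where $B$ is the canonical Brownian motion. Since $u\in\mathbb{C}^{1,2}(\Lambda)$, Theorem \ref{w2} yields
\[
u(B_T)-u(B_t)=\int_t^T\!\bigl(D_su(B_s)+\tfrac{1}{2}D_{xx}u(B_s)\bigr)ds+\int_t^T\!D_xu(B_s)\,dB(s),\quad a.s.
\]
Using that $u$ satisfies \eqref{w8}, the drift term equals $-\int_t^T f(B_s,u(B_s),D_xu(B_s))\,ds$, and from the terminal condition in \eqref{w8} we have $u(B_T)=\Phi(B)$. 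Rearranging gives exactly
\[
u(B_t)=\Phi(B)+\int_t^T\!f(B_s,u(B_s),D_xu(B_s))\,ds-\int_t^T\!D_xu(B_s)\,dB(s),
\]
which is \eqref{BSDE1} with $\xi=\Phi(B)$, $Y(s)=u(B_s)$, $Z(s)=D_xu(B_s)$.

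The second step is to check that $(u(B_\cdot),D_xu(B_\cdot))$ lies in the solution space $S^2(0,T)\times M^2(0,T)$ required by the well-posedness lemma. The continuity of $s\mapsto u(B_s)$ and the $S^2$-bound follow from Corollary \ref{w1} together with Lemma \ref{w4} applied at $\gamma_t=B_t$, since $u(B_s)=Y_{B_s}(s)$. For the integrability of the $Z$-component, I would use the bound $\|Z_{\gamma_t}(s)\|\le C_p(1+\|B_s^{\gamma_t}\|^q)$ from the corollary following Proposition \ref{w6}, which when specialized to $\gamma_t=B_t$ gives a polynomial bound on $|D_xu(B_s)|$ and therefore square-integrability over $[0,T]$ by the usual Brownian moment estimates.

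The final step is uniqueness. Given the admissibility just verified, the existence and uniqueness lemma for BSDEs (the result stated just above Lemma \ref{comp}) applied with terminal $\xi=\Phi(B)\in L^2(\mathcal{F}_T)$ and generator $f$, which satisfies the Lipschitz-with-polynomial-growth hypothesis because of $(\mathbf{H2})$, shows that there is at most one such pair; hence $(u(B_\cdot),D_xu(B_\cdot))$ is the unique solution. I do not anticipate a serious obstacle here; the only point requiring a little care is the admissibility check, since one must convert the deterministic pathwise bounds from Section 3 into stochastic $S^2$/$M^2$-estimates, but this is immediate from Brownian moment bounds and the polynomial growth in $\|\gamma_t\|$.
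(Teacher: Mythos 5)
Your proposal is correct and follows essentially the route the paper intends: the corollary is stated as an immediate consequence of Theorem \ref{w10} (which puts $u$ in $\mathbb{C}^{1,2}(\Lambda)$ and makes it solve \eqref{w8}) combined with the argument of Theorem \ref{Th4.1} (functional It\^o formula applied to $u(B_s)$ plus the standard Pardoux--Peng uniqueness), which is exactly what you carry out at $t=0$ with $\xi=\Phi(B)$. Your extra admissibility check via the moment estimates of Section 3 is sound, though it can be shortened by noting that $u(B_s)=Y_{B_s}(s)=Y_{\gamma_0}(s)$ already lies in $S^2(0,T)\times M^2(0,T)$ by the existence lemma.
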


\begin{remark}
In the case where $\Phi(\gamma)=\varphi(\gamma(T))$, for some $\varphi\in
C_{p}^{3}(\mathbb{R}^{m})$ and $f$ satisfies assumption $(\mathbf{H3})$, the
above results is the nonlinear Feynman-Kac formula, which given by Peng
\cite{Peng1} and Pardoux-Peng \cite{PP2}.
\end{remark}

\begin{remark}
Suppose $k=1$, and let
\[
f(t,y,z)=c(t)y,
\]
where $f$ satisfy assumption $(\mathbf{H3})$.  In that case the BSDE (2) has
the explicit solution :
\[
Y_{{\gamma_{t}}}(s)=\Phi(B^{\gamma_{t}})e^{\int_{s}^{T}c(r)dr}-\int_{s}%
^{T}e^{\int_{s}^{u}c(r)dr}Z_{{\gamma_{t}}}(u)dB(u),
\]
and
\[
Y_{{\gamma_{t}}}(t)=E[\Phi(B^{\gamma_{t}})e^{\int_{t}^{T}c(r)dr}].
\]

\end{remark}

\begin{example}
Suppose $\Phi:\Lambda_{T}\mapsto\mathbb{R}$ :
\[
\Phi(\gamma)=\int_{0}^{T}\varphi(\gamma(s))ds,
\]
for some $\varphi\in C_{b}^{3}(\mathbb{R})$. It is obvious that $\Phi$ satisfies assumption $(\mathbf{H1})$.

From the above remark, for each $t\in\lbrack0,T],\gamma_{t}\in\Lambda_{t}$
\[
u(\gamma_{t})=\int_{0}^{t}\varphi(\gamma_{t}(s))dse^{\int_{t}^{T}c(r)dr}%
+\int_{t}^{T}e^{\int_{t}^{T}c(r)dr}E[\varphi(\gamma_{t}(t)+B(s)-B(t))]ds.
\]
By the classic Feyman-Kac formula, $\forall s\in\lbrack0,T]$ and
$x\in\mathbb{R}$,
\[
u^{s}(t,x)=E[\varphi(x+B(s)-B(t))],\ \ \ t\leq s
\]
is the solution of the following parabolic differential equation :
\[%
\begin{cases}
\frac{\partial u^{s}}{\partial t}+\frac{1}{2}\frac{\partial^{2}u^{s}}{\partial
x^{2}}=0,\ \ \ t\in\lbrack0,s)\\
u^{s}(s,x)=\varphi(x).
\end{cases}
\]
then
\[
u(\gamma_{t})=\int_{0}^{t}\varphi(\gamma_{t}(s))dse^{\int_{t}^{T}c(r)dr}%
+\int_{t}^{T}e^{\int_{t}^{T}c(r)dr}u^{s}(t,\gamma_{t}(t))ds.
\]
By the definitions of horizontal derivative and vertical derivative, thus
\begin{align*}
&  D_{t}u(\gamma_{t})=-c(t)u(\gamma_{t})+e^{\int_{t}^{T}c(r)dr}\int_{t}%
^{T}\partial_{t}u^{s}(t,\gamma_{t}(t))ds,\\
&  D_{x}u(\gamma_{t})=e^{\int_{t}^{T}c(r)dr}\int_{t}^{T}\partial_{x}%
u^{s}(t,\gamma_{t}(t))ds,\\
&  D_{xx}u(\gamma_{t})=e^{\int_{t}^{T}c(r)dr}\int_{t}^{T}\partial_{xx}%
^{2}u^{s}(t,\gamma_{t}(t))ds.
\end{align*}
It is obvious that
\[
D_{t}u(\gamma_{t})+\frac{1}{2}D_{xx}u(\gamma_{t})=-c(t)u(\gamma_{t}),
\]
which satisfies the equation $(\ref{w8})$.
\end{example}


                                                                                          %

\end{document}